\newcommand{\beas}{\begin{eqnarray*}}
\newcommand{\enas}{\end{eqnarray*}}
\newcommand{\bea}{\begin{eqnarray}}
\newcommand{\ena}{\end{eqnarray}}
\newcommand{\bms}{\begin{multline*}}
\newcommand{\ems}{\end{multline*}}
\newcommand{\bels}{\begin{align*}}
\newcommand{\enls}{\end{align*}}
\newcommand{\bel}{\begin{align}}
\newcommand{\enl}{\end{align}}
\newcommand{\ignore}[1]{}
\newtheorem{theorem}{Theorem}[section]
\newtheorem{remark}{Remark}[section]
\newtheorem{lemma}{Lemma}[section]
\newtheorem{definition}{Definition}[section]
\newtheorem{assumption}{Assumption}[section]
\def\blfootnote{\xdef\@thefnmark{}\@footnotetext}
\newcommand{\expect}[1]{\mathbb{E}{\l(#1\r)}}
\newcommand{\dotp}[2]{\left\langle#1,#2\right\rangle}
\def\r{\right}
\def\l{\left}
\begin{document}

\begin{frontmatter}
\title{
Asynchronous Optimization over Weakly Coupled Renewal Systems
}
\runtitle{Asynchronous renewal optimization}
\begin{aug}
\author{\fnms{Xiaohan} \snm{Wei}\thanksref{t1}\ead[label=e1]{xiaohanw@usc.edu}}
\and
\author{\fnms{Michael} \snm{J. Neely}\thanksref{t1}\ead[label=e2]{mikejneely@gmail.com}}
\thankstext{t1}{Department of Electrical Engineering, University of Southern California}
\thankstext{t2}{This work was supported by grant NSF CCF-1718477.}
\runauthor{X. Wei, M. J. Neely}

\affiliation{University of Southern California}
\printead{e1,e2}
\end{aug}

\maketitle

\begin{abstract}
This paper considers optimization over multiple renewal systems coupled by time average constraints.  These systems act asynchronously over variable length frames. When a particular system starts a new renewal frame, it chooses an action from a set of options for that frame. The action determines the duration of the frame, the penalty incurred during the frame (such as energy expenditure), and a vector of performance metrics (such as instantaneous number of job services).  The goal is to minimize the time average penalty subject to time average overall constraints on the corresponding metrics. This problem has applications to task processing networks and coupled Markov decision processes (MDPs).  We propose a new algorithm so that each system can make its own decision after observing a global multiplier that is updated every slot.  We show that this algorithm satisfies the desired constraints and achieves $O(\epsilon)$ near optimality with $O(1/\epsilon^2)$ convergence time.
\end{abstract}

\begin{keyword}
\kwd{Stochastic programming}
\kwd{Fractional programming}
\kwd{Markov decision processes}
\kwd{Renewal processes}
\end{keyword}

\end{frontmatter}

\section{Introduction}
Consider $N$ renewal systems that operate over a slotted timeline ($t \in \{0, 1, 2, \ldots\}$).  
The timeline for each system $n \in \{1, \ldots, N\}$ is segmented into back-to-back intervals of time slots called \emph{renewal frames}. The start of each renewal frame for a particular system is called a \emph{renewal time} or simply a \emph{renewal} for that system.
The duration of each renewal frame is a random positive integer with distribution that depends on a control action chosen by the system at the start of the frame.  The decision at each renewal frame also determines the penalty and a vector of performance metrics during this frame.  The systems are coupled by time average constraints placed on these metrics over all systems.  The goal is to design a decision strategy for each system so that overall time average penalty is minimized subject to time average constraints. 

We use $k=0, 1,2,\cdots$ to index the renewals. Let $t^n_k$ be the time slot corresponding to the $k$-th renewal of the $n$-th system with the convention that $t^n_0=0$. Let $\mathcal{T}^{n}_{k}$ be the set of all slots from $t^n_k$ to $t^n_{k+1}-1$.
At time $t^n_k$, the $n$-th system chooses a possibly random decision $\alpha^n_k$ in a set $\mathcal{A}^n$. This action determines the distributions of the following random variables:
\begin{itemize}
\item The duration of the $k$-th renewal frame $T^n_k:=t^n_{k+1}-t^n_k$, which is a positive integer.
\item A vector of performance metrics at each slot of that frame 
$\mathbf{z}^n[t]:=\left(z^n_1[t],~z^n_2[t],~\cdots,~z^n_L[t]\right)$,\\
$t\in\mathcal{T}^{n}_{k}$.
\item A penalty incurred at each slot of the frame $y^n[t]$, $t\in\mathcal{T}^{n}_{k}$.
\end{itemize}
We assume each system has the \textit{renewal property} that given $\alpha^n_k=\alpha^n\in\mathcal{A}^n$, the random variables $T^n_k$, $\mathbf{z}^n[t]$ and $y^n[t]$,~$t\in\mathcal{T}^n_k$ are independent of the information of all systems from the slots before $t^n_k$ with the following \textit{known} conditional expectations $\expect{\left.T^n_k\right|\alpha^n_k=\alpha^n}$, $\expect{\left.\sum_{t\in\mathcal{T}^n_k}y^n[t]\right|\alpha^n_k=\alpha^n}$ and $\expect{\left.\sum_{t\in\mathcal{T}^n_k}\mathbf{z}^n[t]\right|\alpha^n_k=\alpha^n}$. 


In addition, we have an uncontrollable external i.i.d. random process $\{\mathbf{d}[t]\}_{t=0}^{\infty}\subseteq\mathbb{R}^L$ which can be observed during each time slot. Let $d_l:=\expect{d_l[t]}$. 
 The goal is to minimize the total time average penalty of these $N$ renewal systems subject to $L$ total time average constraints on the performance metrics related to the external i.i.d. process, i.e. we aim to solve the following optimization problem:
\begin{align}
\min~~&\limsup_{T\rightarrow\infty}\frac{1}{T}\sum_{t=0}^{T-1}\sum_{n=1}^N\expect{y^n[t]}\label{prob-1}\\
\textrm{s.t.}~~ &\limsup_{T\rightarrow\infty}\frac{1}{T}\sum_{t=0}^{T-1}\sum_{n=1}^N\expect{z^n_l[t]}\leq d_l,~~l\in\{1,2,\cdots,L\}.\label{prob-2}
\end{align} 
This problem is challenging because these $N$ systems are weakly coupled by the time average constraints \eqref{prob-2}, yet each of them operates over its own renewal frames. The renewals of different systems do not have to be synchronized and they do not have to occur at the same rate. Fig. \ref{fig:Stupendous1} plots a sample timeline of three parallel renewal systems.
In Section \ref{section:algorithm}, we will develop an algorithm that does not need the knowledge of $d_l=\expect{d_l[t]}$ with a provable performance guarantee.

\begin{figure}[htbp]
   \minipage{0.6\textwidth}
   \includegraphics[width=\linewidth]{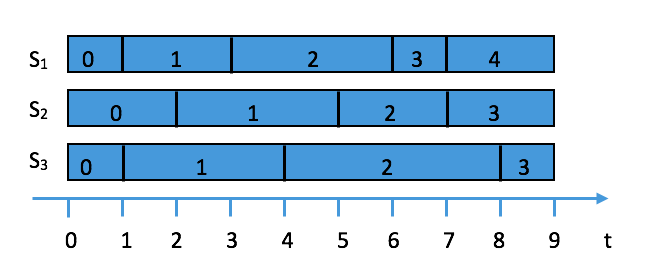} 
   \caption{The sample timelines of three asynchronous parallel renewal systems, where the numbers underneath the figure index time slots and the numbers inside the blocks index the renewals of each system.}
   \label{fig:Stupendous1}
   \endminipage
\end{figure}

\subsection{Example Applications}\label{sec:application}

\subsubsection{Multi-server energy-aware scheduling}\label{sec:server-app}
Consider a slotted time system with $L$ classes of jobs and $N$ servers. Job arrivals are Poisson distributed with rates $\lambda_1,~\cdots,~\lambda_L$, respectively. These jobs are stored in separate queues denoted as $Q_1[t],~\cdots,~Q_L[t]$ in a router waiting to be served. Assume the system is empty at time $t=0$ so that $Q_l[0]=0,~\forall l\in\{1,2,\cdots,L\}$. Let $\lambda_l[t]$ be the precise number of class $l$ job arrivals at slot $t$, then, we have $\expect{\lambda_l[t]}=\lambda_l,~\forall l\in\{1,2,\cdots,L\}$. 
Let $\mu^n_l[t]$ and $e^n[t]$ be the number of class $l$ jobs served and the energy consumption for server $n$ at time slot $t$, respectively. Fig. \ref{fig:multi-server} sketches an example architecture of the system with 3 classes of jobs and 10 servers.

Each server makes decisions over renewal frames and the first frame starts at time slot $t=0$. Successive renewals can happen at different slots for different servers. For the $n$-th server, at the beginning of the $k$-th frame ($k\in\mathbb{N}$), it chooses a processing mode $m^n_k$ within the set of all modes $\mathcal{M}^n$. 
The processing mode $m_k^n$ determines distributions on the number of jobs served, the service time, and the energy expenditure, with conditional expectations: 
\begin{itemize}
\item $\widehat{T}^n(m^n_k):=\expect{\left.T^n_k\right|~m^n_k}$. The expected frame size. 
\item
$\widehat{\mu}^n_l(m^n_k)= \expect{\left.\sum_{t\in\mathcal{T}^n_k}\mu^n_l[t]\right|~m^n_k}$. The expected number of class $l$ jobs served. 
\item $\widehat{e}^n(m^n_k)= \expect{\left.\sum_{t\in\mathcal{T}^n_k}e^n[t]\right|~m^n_k}$. The expected energy consumption.
\end{itemize}

The goal is to minimize the time average energy consumption, subject to the queue stability constraints, i.e.
\begin{align}
\min~~&\limsup_{T\rightarrow\infty}\frac1T\sum_{t=0}^{T-1}\sum_{n=1}^N\expect{e^n[t]}\label{ews-1}\\
s.t.~~&\liminf_{T\rightarrow\infty}\frac1T\sum_{t=0}^{T-1}\sum_{n=1}^N\expect{\mu^n_l[t]}
\geq\lambda_l,~\forall l\in\{1,2,\cdots,L\}.\label{ews-2}
\end{align}
Thus, we have formulated the problem into the form \eqref{prob-1}-\eqref{prob-2}.
Note that the external process in this example is the arrival process of $L$ classes of jobs with potentially unknown arrival rates $\lambda_l$.

Previously, \cite{Neely12} treats a special case of this problem where all energy and service quantities are deterministic functions of the processing modes.  The newly developed algorithm in the current paper can be used to solve this problem with considerably more general stochastic assumptions.

\begin{figure}[htbp]
   \minipage{0.6\textwidth}
   \includegraphics[width=\linewidth]{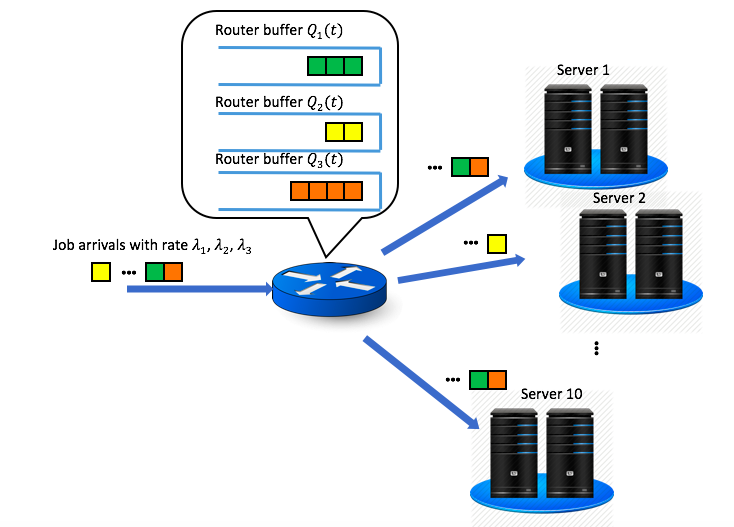} 
   \caption{Illustration of an energy-aware scheduling system with 3 classes of jobs and 10 parallel servers.}
   \label{fig:multi-server}
   \endminipage
\end{figure}

\subsubsection{Coupled ergodic MDPs}\label{sec:MDP}
Consider $N$ discrete time Markov decision processes (MDPs) over an infinite horizon.  Each MDP
consists of a finite state space $\mathcal{S}^n$, and an action space $\mathcal{U}^n$ at each state $s\in\mathcal{S}^n.$\footnote{To simplify the notation, we assume each state has the same action space $\mathcal{A}^n$. All our analysis generalizes trivially to states with different action spaces.} For each state $s\in\mathcal{S}$, we use $P_u^n(s,s')$ to denote the transition probability from $s\in\mathcal{S}^n$ to $s'\in\mathcal{S}^n$ when taking action $u\in\mathcal{U}^n$, i.e. 
\[
P_u^n(s,s')  = Pr(s[t+1]=s'~|~s[t] = s,~u[t]=u),
\]
where $s[t]$ and $u[t]$ are state and action at time slot $t$. 

At time slot $t$, after observing the state $s[t]\in\mathcal{S}^n$ and choosing the action $u[t]\in\mathcal{U}^n$, the n-th MDP receives a penalty $y^n(u[t],s[t])$ and $L$ types of resource costs $z_{1}^n(u[t],s[t]),\cdots,z_{L}^n(u[t],s[t])$, where these functions are all bounded mappings from $\mathcal{S}^n\times\mathcal{U}^n$ to $\mathbb{R}$. For simplicity we write $y^n[t] = y^n(u[t],s[t])$ and $z_l^n[t] = z_l^n(u[t],s[t])$. The goal is to minimize the time average overall penalty with constraints on time average overall costs, where these MDPs are weakly coupled through the time average constraints. This problem 
can be written in the form \eqref{prob-1}-\eqref{prob-2}.

In order to define the renewal frame, we need one more assumption on the MDPs. We assume each of the MDPs is \textit{ergodic}, i.e. there exists a state which is recurrent and the corresponding Markov chain is aperiodic under any randomized stationary policy\footnote{A \textit{randomized stationary policy} $\pi$ is an algorithm which chooses actions at state $s\in\mathcal{S}^n$ according to a fixed conditional distribution $\pi(u | s),~u\in\mathcal{U}^n$ and is independent of all other past information, i.e. $Pr(u[t] |\mathcal{F}_t) = \pi(u[t] |s[t])$,~$u[t]\in\mathcal{U}^n$, $s[t]\in\mathcal{S}^n$ and $\mathcal{F}_t$ is the past information up to time 
$t$.}, 
with bounded expected recurrence time. Under this assumption, the renewals for each MDP can be defined as successive revisitations to the recurrent state, and the action set $\mathcal{A}^n$ in such scenario is defined as the set of all randomized stationary policies that can be implemented in one renewal frame. 
Thus, our formulation includes coupled ergodic MDPs. We refer to \cite{Al99}, \cite{Be01}, and \cite{Ro02} for more details on MDP theory and related topics.

As a side remark, this multi-MDP problem can be viewed as a single MDP on an enlarged state space. Constrained MDPs are discussed previously in \cite{Al99}.
One can show that under the previous ergodic assumption, the minimum of \eqref{prob-1}-\eqref{prob-2} is achieved by a randomized stationary policy, and furthermore, such a policy can be obtained via solving a linear program reformulated from \eqref{prob-1}-\eqref{prob-2} offline. However, formulating such LP requires the knowledge of all the parameters in the problem, including the statistics of the external process $\{\mathbf{d}[t]\}_{t=0}^\infty$, and the resulting LP is often computationally intractable when the number of MDPs is very large. On the contrary,
our newly developed algorithm is carried out in an online manner, does't require the statistics of the external process and
enjoys a natural ``decoupled'' structure, effectively reducing the computational load.


\subsection{Challenges and previous approaches}
As mentioned above, for the special case of coupled ergodic MDPs, this problem can be solved via a linear program (see \cite{Al99} and also \cite{Fo66} for detailed discussions on formulating MDPs as linear programs). 
However, 
this approach becomes intractable as the number of MDPs gets very large. 
On the other hand, existing asynchronous algorithms and analysis (e.g. \cite{BT97}\cite{BGPS06}\cite{PXYY15}\cite{SN11}) treat only the case where system delays (frames) are of fixed distribution independent of the actions or even deterministic, which are not readily extendable to our problem.

The main technical challenge is the dilemma on how to pick a correct time scale to carry out an algorithm and corresponding analysis. On one hand, since time is slotted, one would naturally think of synchronizing all systems on the slot scale and designing a slot-based algorithm. However, since each renewal spans multiple slots, any such algorithm would essentially cut some renewals in the middle and it would be difficult to analyze any particular system. On the other hand, if we analyze each system over its own renewal frames, it is not clear how to piece together these individual analyses.

Prior approaches treat this challenge only in special cases. 
The works \cite{Neely12} and \cite{Ne12} consider a special case where all quantities introduced above are deterministic functions of the actions. 
The work \cite{WN15} considers another special case of the current formulation in server scheduling, where there is only one queue stability constraint and it can be easily met via controlling the arrival rate to the system. These two methods circumvent the aforementioned dilemma by making extra assumptions on the system and thus can not be generalized to the current setting.

\subsection{Our contributions}
The current paper develops a new algorithm where each system operates on its own renewal frame. It is fully analyzed with convergence as well as convergence time results. 
 As a first technical contribution, we fully characterize the fundamental performance region of the problem \eqref{prob-1}-\eqref{prob-2} (Lemma \ref{stationary-lemma}).
 We then resolve the aforementioned dilemma by constructing a supermartingale along with a stopping-time to ``synchronize'' all systems on a slot basis, by which we could piece together analysis of each individual system to prove the convergence of the proposed algorithm. Furthermore, encapsulating this new idea into convex analysis tools, we
 prove the $\mathcal{O}(1/\varepsilon^2)$ convergence time of the proposed algorithm to reach $\mathcal{O}(\varepsilon)$ near optimality under a mild assumption on the existence of a Lagrange multiplier (Section \ref{sec-convergence-time}). Specifically, we show that for any accuracy $\epsilon>0$ and any time
$T\geq1/\varepsilon^2$, the sequence $\{y^n[t]\}$ and $\{\mathbf{z}^n[t]\}$ produced by our algorithm satisfies,
\begin{align*}
&\frac1T\sum_{t=0}^{T-1}\sum_{n=1}^N\expect{y^n[t]}\leq f_*+\mathcal{O}(\varepsilon),\\
&\frac1T\sum_{t=0}^{T-1}\sum_{n=1}^N\expect{z^n_l[t]}\leq d_l+\mathcal{O}(\varepsilon),
l\in\{1,2,\cdots,L\},
\end{align*}
where $f_*$ denotes the optimal objective value of \eqref{prob-1}-\eqref{prob-2}.
Simulation experiments on the aforementioned multi-server energy-aware scheduling problem also demonstrate the effectiveness of the proposed algorithm.

\subsection{Other related works}
The problem considered in the current paper is a generalization of optimization over a single renewal system. It is shown in \cite{Ne09} that for the single renewal system with finite action set, the problem can be solved (offline) via a linear fractional program. Methods for solving linear fractional programs can be found in \cite{BV04} and \cite{Sc83}. The \textit{drift-plus-penalty ratio} approach is also developed in  \cite{Neely2010} and \cite{Ne09} for the single renewal system. 

On the other hand, our problem is also related to the multi-server scheduling as is shown in one of the example applications. When assuming proper statistics of the arrivals and/or services, energy optimization problems in multi-server systems can also be treated via queueing theory. Specifically, by assuming both arrivals and services are Poisson distributed, \cite{GDHS13} treats the multi-server system as an M/M/k/setup queue and explicitly computes several performance metrics via the renewal reward theorem. By assuming arrivals are Poisson and only one server, \cite{LN14} and \cite{Yao02} treat the system as a multi-class M/G/1 queue and optimize the average energy consumption via polymatroid optimization.

\subsection{Notation and organization of the paper}
Throughout the paper, we use superscript $n\in\{1,2,\cdots,N\}$ to index different systems, use the subscript $l\in\{1,2,\cdots,L\}$ to index different constraints and use the subscript $k\in\mathbb{N}$ to index the frames. For any vector $\mathbf{x}\in\mathbb{R}^d$, the considered norms are $\|\mathbf{x}\|:=\sqrt{\sum_{i=1}^dx_i^2}$, $\|\mathbf{x}\|_1:=\sum_{i=1}^d|x_i|$ and $\|\mathbf{x}\|_{\infty}:=\max_i~|x_i|$.

The rest of the paper is organized as follows: Section \ref{section:algorithm} introduces the proposed algorithm along with technical assumptions. Section \ref{section:limiting} introduces our main technical argument proving the convergence of the proposed algorithm via supermartingale and stopping time constructions. Building upon these technical tools, Section \ref{sec-convergence-time} takes one step further and proves the convergence time of the proposed algorithm. Finally, a simulation study regarding multi-server energy-aware scheduling is given in Section
\ref{section-application}.


%


\section{Algorithm}\label{section:algorithm}

\subsection{Technical preliminaries}
Throughout the paper, we make the following assumptions.

\begin{assumption}\label{feasible-assumption}
The problem \eqref{prob-1}-\eqref{prob-2} is feasible, i.e. there are action sequences 
$\{\alpha_k^n\}_{k=0}^\infty$ for all $n\in\{1,2,\cdots,N\}$ so that the corresponding process $\{\mathbf{z}^n[t]\}_{t=0}^\infty$ satisfies the constraints \eqref{prob-2}.
\end{assumption}

Following this assumption, we define $f_*$ as the infimum objective value for \eqref{prob-1}-\eqref{prob-2} over all decision sequences that satisfy the constraints.

\begin{assumption}[Boundedness]\label{bounded-assumption}
For any $k\in\mathbb{N}$ and any $n\in\{1,2,\cdots,N\}$, there exist absolute constants $y_{\max}$, $z_{\max}$ and $d_{\max}$ such that
\begin{align*}
|y^n[t]|\leq y_{\max},~~|z^n_l[t]|\leq z_{\max},~~|d_l[t]|\leq d_{\max},~~\forall t\in\mathcal{T}^n_k,
~\forall l\in\{1,2,\cdots,L\}.
\end{align*}
Furthermore, 
there exists an absolute constant $B\geq1$ such that for every fixed $\alpha^n\in\mathcal{A}^n$ and every $s\in\mathbb{N}$ for which $Pr(T^n_k\geq s|\alpha^n_k=\alpha^n)>0$,
\begin{equation}\label{residual-life-bound}
\expect{\left.(T^n_k-s)^2\right|~\alpha^n_k=\alpha^n,T^n_k\geq s}\leq B.
\end{equation}
\end{assumption}
\begin{remark}
The quantity $T^n_k-s$ is usually referred to as the residual lifetime. In the special case where $s=0$, \eqref{residual-life-bound} gives the uniform second moment bound of the renewal frames as
\[\expect{\left.(T^n_k)^2\right|~\alpha^n_k=\alpha^n}\leq B.\]
Note that \eqref{residual-life-bound} is satisfied for a large class of problems. In particular, it can be shown to hold in the following three cases:
\begin{enumerate}
\item If the inter-renewal $T^n_k$ is deterministically bounded.
\item If the inter-renewal $T^n_k$ is geometrically distributed.
\item If each system is a finite state ergodic MDP with a finite action set.
\end{enumerate}
\end{remark}

\begin{definition}\label{PV-def}
For any $\alpha^n\in\mathcal{A}^n$, let
\[\widehat{y}^n(\alpha^n):=\expect{\left.\sum_{t\in\mathcal{T}^n_k}y^n[t]\right|\alpha^n_k=\alpha^n},~
~\widehat{z}^n_l(\alpha^n):=\expect{\left.\sum_{t\in\mathcal{T}^n_k}z^n_l[t]\right|\alpha^n_k=\alpha^n},\]
and $\widehat{T}^n(\alpha^n):=\expect{T^n_k|\alpha^n_k=\alpha^n}$. Define
\begin{align*}
&\widehat{f}^n(\alpha^n):=\widehat{y}^n(\alpha^n)/\widehat{T}^n(\alpha^n),\\
&\widehat{g}^n_l(\alpha^n):=\widehat{z}^n_l(\alpha^n)/\widehat{T}^n(\alpha^n),~\forall l\in\{1,2,\cdots,L\},
\end{align*}
and let $\left(\widehat{f}^n(\alpha^n),~\widehat{\mathbf{g}}^n(\alpha^n)\right)$ be a performance vector under the action $\alpha^n$.
\end{definition}

Note that by Assumption \ref{bounded-assumption}, $\widehat{y}^n(\alpha^n)$ and $\widehat{\mathbf{z}}^n(\alpha^n)$ in Definition \ref{PV-def} are both bounded, and $T^n_k\geq1,~\forall k\in\mathbb{N}$, thus, the set $\left\{\left(\widehat{f}^n(\alpha^n),~\widehat{\mathbf{g}}^n(\alpha^n)\right),~\alpha^n\in\mathcal{A}^n\right\}$ is also bounded. The following mild assumption 
states that this set is also closed.

\begin{assumption}\label{compact-assumption}
 The set $\left\{\left(\widehat{f}^n(\alpha^n),~\widehat{\mathbf{g}}^n(\alpha^n)\right),~\alpha^n\in\mathcal{A}^n\right\}$ is compact.
\end{assumption}

Finally, we define the performance region of each individual system as follows.

\begin{definition}\label{PR-def}
Let $\mathcal{S}^n$ be the convex hull of $\left\{\left(\widehat{y}^n(\alpha^n),~\widehat{\mathbf{z}}^n(\alpha^n),~\widehat{T}^n(\alpha^n)\right):~\alpha^n\in\mathcal{A}^n\right\}\subseteq\mathbb{R}^{L+2}$. Define
\[\mathcal{P}^n:=\left\{\left(y/T,~\mathbf{z}/T\right):~(y,\mathbf{z},T)\in\mathcal{S}^n\right\}\subseteq\mathbb{R}^{L+1}\]
as the performance region of system $n$.
\end{definition}

\subsection{Proposed algorithm}
In this section, we propose an algorithm where each system can make its own decision after observing a global vector of multipliers which is updated using the global information from all systems. We start by defining a vector of virtual queues $\mathbf{Q}[t]:=\left(Q_1[t],~Q_2[t],~\cdots,~Q_L[t]\right)$, which are 0 at $t=0$ and updated as follows,
\begin{align}
Q_l[t+1]=\max\left\{Q_l[t]+\sum_{n=1}^Nz_l^n[t]-d_l[t],~0\right\},~
l\in\{1,2,\cdots,L\}.\label{queue-update}
\end{align}
These virtual queues will serve as global multipliers to control the growth of corresponding resource consumptions.

Then, the proposed algorithm runs as follows via a fixed trade-off parameter $V>0$:
\begin{itemize}
\item At the beginning of $k$-th frame of system $n$, the system observes the vector of virtual queues $\mathbf{Q}[t^n_k]$ and makes a decision $\alpha^n_k\in\mathcal{A}^n$ so as to solve the following subproblem:
\begin{align}\label{DPP-ratio}
D^n_k:=\min_{\alpha^n\in\mathcal{A}^n}
\frac{\expect{\left.\sum_{t\in\mathcal{T}^n_k}\left(Vy^n[t]+\dotp{\mathbf{Q}[t^n_k]}{\mathbf{z}^n[t]}\right)\right|\alpha^n_k=\alpha^n,\mathbf{Q}[t^n_k]}}{\expect{\left.T^n_k\right|\alpha^n_k=\alpha^n,\mathbf{Q}[t^n_k]}}.
\end{align}
\item Update the virtual queue after each slot:
\begin{align*}
Q_l[t+1]=\max\left\{Q_l[t]+\sum_{n=1}^Nz_l^n[t]-d_l[t],~0\right\},~
l\in\{1,2,\cdots,L\}.
\end{align*}
\end{itemize}
Note that using the notation specified in Definition \ref{PV-def}, we can rewrite \eqref{DPP-ratio} in a more concise way as follows:
\begin{align}\label{DPP-ratio-simple}
\min_{\alpha^n\in\mathcal{A}^n} \left\{V\widehat{f}^n(\alpha^n)+\dotp{\mathbf{Q}[t^n_k]}{\widehat{\mathbf{g}}^n(\alpha^n)}\right\},
\end{align}
which is a deterministic optimization problem. Then,
by the compactness assumption (Assumption \ref{compact-assumption}), there always exists a solution to this subproblem. 

This algorithm requires knowledge of the conditional expectations associated with the performance vectors $\left(\widehat{f}^n(\alpha^n),~\widehat{\mathbf{g}}^n(\alpha^n)\right),~\alpha^n\in\mathcal{A}^n$, but only requires individual systems $n$ to know their own $\left(\widehat{f}^n(\alpha^n),~\widehat{\mathbf{g}}^n(\alpha^n)\right),~\alpha^n\in\mathcal{A}^n$, and therefore decouples these systems. Furthermore, the virtual queue update uses observed $d_l[t]$ and does not require knowledge of distribution or mean of $d_l[t]$.

In addition, we introduce $\mathbf{Q}[t]$ as ``virtual queues'' for the following two reasons: First, it can be mapped to real queues in applications (such as the server scheduling problem mentioned in Section \ref{sec:server-app}), where $\mathbf{d}[t]$ stands for the arrival process and $\mathbf{z}[t]$ is the service process.
Second, stabilizing these virtual queues implies the constraints \eqref{prob-2} are satisfied, as is illustrated in the following lemma, whose proof is given in the appendix.

\begin{lemma}\label{lemma:queue-bound}
If $Q_l[0]=0$ and $\lim_{T\rightarrow\infty}\frac{1}{T}{\expect{Q_l[T]}}=0$, then, $\limsup_{T\rightarrow\infty}\frac{1}{T}\sum_{t=0}^{T-1}\sum_{n=1}^N\expect{z^n_l[t]}\leq d_l$.
\end{lemma}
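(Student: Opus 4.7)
The plan is to exploit the elementary ``drift unrolling'' inequality that is standard in virtual queue analysis. Starting from the update rule \eqref{queue-update}, I drop the outer $\max\{\cdot,0\}$ to obtain the one-sided bound
\[
Q_l[t+1]\geq Q_l[t]+\sum_{n=1}^N z_l^n[t]-d_l[t].
\]
Summing this telescoping inequality from $t=0$ to $T-1$ and using $Q_l[0]=0$ yields
\[
Q_l[T]\geq \sum_{t=0}^{T-1}\sum_{n=1}^N z_l^n[t]-\sum_{t=0}^{T-1}d_l[t].
\]

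Next I take expectations. Since $\{\mathbf{d}[t]\}$ is i.i.d.\ with $\expect{d_l[t]}=d_l$, we get
\[
\expect{Q_l[T]}\geq \sum_{t=0}^{T-1}\sum_{n=1}^N \expect{z_l^n[t]}- T\, d_l,
\]
and dividing by $T$ and rearranging gives the ``instantaneous'' bound
\[
\frac{1}{T}\sum_{t=0}^{T-1}\sum_{n=1}^N \expect{z_l^n[t]}\leq d_l+\frac{\expect{Q_l[T]}}{T}.
\]

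Finally, taking $\limsup_{T\to\infty}$ on both sides and invoking the hypothesis $\lim_{T\to\infty}\expect{Q_l[T]}/T=0$ produces the desired conclusion
\[
\limsup_{T\to\infty}\frac{1}{T}\sum_{t=0}^{T-1}\sum_{n=1}^N \expect{z_l^n[t]}\leq d_l.
\]

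There is essentially no obstacle here. The only step that requires any care is the justification that dropping the positive part in the queue update is legitimate (it is, since $\max\{a,0\}\geq a$) and that expectations can be interchanged with the finite sum (trivial by linearity, with all summands bounded by Assumption \ref{bounded-assumption}). The argument uses only the recursion for $Q_l[t]$, the initial condition, the i.i.d.\ property of $\mathbf{d}[t]$, and the vanishing-rate hypothesis on $\expect{Q_l[T]}$; it does not depend on the particular algorithm driving $z_l^n[t]$, which is why this lemma plays the role of a generic ``constraint satisfaction from queue stability'' statement.
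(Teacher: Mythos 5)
Your proof is correct and follows exactly the same route as the paper's: drop the positive part in the queue update to obtain a one-sided telescoping bound, sum from $0$ to $T-1$ with $Q_l[0]=0$, take expectations using $\expect{d_l[t]}=d_l$, divide by $T$, and pass to the $\limsup$ under the hypothesis $\lim_{T\to\infty}\expect{Q_l[T]}/T=0$. There is nothing further to add.
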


\subsection{Computing subproblems}

Since a key step in the algorithm is to solve the optimization problem \eqref{DPP-ratio-simple}, we make several comments on the computation of the ratio minimization \eqref{DPP-ratio-simple}.
In general, one can solve the ratio optimization problem \eqref{DPP-ratio} (therefore \eqref{DPP-ratio-simple}) via a bisection search algorithm. For more details, see section 7 of \cite{Neely2010}. However, more often than not, bisection search is not the most efficient one. We will discuss two special cases arising from applications where we can find a simpler way of solving the subproblem.

First of all, when there are only a finite number of actions in the set $\mathcal{A}^n$, one can solve \eqref{DPP-ratio-simple} simply via enumerating. This is a typical scenario in energy-aware scheduling where a finite action set consists of different processing modes that can be chosen by servers.

 Second, when the set $\left\{\left(\widehat{y}^n(\alpha^n),~\widehat{\mathbf{z}}^n(\alpha^n),~\widehat{T}^n(\alpha^n)\right):~\alpha^n\in\mathcal{A}^n\right\}$ specified in Definition \ref{PR-def} is itself a convex hull of a finite sequence 
 $\{(y_j,\mathbf{z}_j,T_j)\}_{j=1}^m$, then, \eqref{DPP-ratio-simple} can be rewritten as a simple enumeration:
 \[
\min_{i\in\{1,2,\cdots,m\}}~~\left\{V\frac{y_i}{T_i}+\dotp{\mathbf{Q}[t^n_k]}{\frac{\mathbf{z}_i}{T_i}}\right\}.
 \]
 To see this, note that by definition of convex hull, for any $\alpha^n\in\mathcal{A}^n$,
 $\left(\widehat{y}^n(\alpha^n),~\widehat{\mathbf{z}}^n(\alpha^n),~\widehat{T}^n(\alpha^n) \right)
 = \sum_{j=1}^mp_j\cdot(y_j,z_j,T_j)$ for some $\{p_j\}_{j=1}^m$, $p_j\geq0$ and $\sum_{j=1}^mp_j=1$. Thus, 
  \begin{align*}
V\widehat{f}^n(\alpha^n)+\dotp{\mathbf{Q}[t^n_k]}{\widehat{\mathbf{g}}^n(\alpha^n)}
 =& V\frac{\sum_{j=1}^mp_jy_j}{\sum_{j=1}^mp_jT_j}
 +\dotp{\mathbf{Q}[t^n_k]}{\frac{\sum_{j=1}^mp_j\mathbf{z}_j}{\sum_{j=1}^mp_jT_j}}\\
 =& \sum_{i=1}^m\frac{p_iT_i}{\sum_{j=1}^mp_jT_j}\left(V\frac{y_i}{T_i}+\dotp{\mathbf{Q}[t^n_k]}{\frac{\mathbf{z}_i}{T_i}}\right)\\
 =:& \sum_{i=1}^m q_i\left(V\frac{y_i}{T_i}+\dotp{\mathbf{Q}[t^n_k]}{\frac{\mathbf{z}_i}{T_i}}\right),
 \end{align*}
 where we let $q_i=\frac{p_iT_i}{\sum_{j=1}^mp_jT_j}$. 
Note that
 $q_i\geq0$ and $\sum_{i=1}^mq_i = 1$ because $T_i\geq1$. Hence, solving \eqref{DPP-ratio-simple} is equivalent to choosing $\{q_i\}_{i=1}^m$ to minimize the above expression, which boils down to choosing a single 
 $(y_i,\mathbf{z}_i,T_i)$ among $\{(y_j,\mathbf{z}_j,T_j)\}_{j=1}^m$ which achieves the minimum.

Note that such a convex hull case stands out not only because it yields a simple solution, but also because of the fact that ergodic coupled MDPs discussed in 
Section \ref{sec:MDP} have the region  $\left\{\left(\widehat{y}^n(\alpha^n),~\widehat{\mathbf{z}}^n(\alpha^n),~\widehat{T}^n(\alpha^n)\right):~\alpha^n\in\mathcal{A}^n\right\}$ 
being the convex hull of a finite sequence of points $\{(y_j,\mathbf{z}_j,T_j)\}_{j=1}^m$, where each point $(y_j,\mathbf{z}_j,T_j)$ results from a
 pure stationary policy (\cite{Al99}).
\footnote{A pure stationary policy is an algorithm where the decision to be taken at any time $t$ is a deterministic function of the state at time $t$, and independent of all other past information.} Thus, solving \eqref{DPP-ratio-simple} for the ergodic coupled MDPs reduces to choosing a pure policy among a finite number of pure policies.

\section{Limiting Performance}\label{section:limiting}
For the rest of the paper, the underlying probability space is denoted as the tuple $(\Omega,~\mathcal{F},~P)$.
 Let $\mathcal{F}[t]$ be the system history up until time slot $t$. Formally, $\{\mathcal{F}[t]\}_{t=0}^\infty$ is a filtration with $\mathcal{F}[0]=\{\emptyset,\Omega\}$ and each $\mathcal{F}[t],~t\geq1$ is the $\sigma$-algebra generated by all random variables from slot 0 to $t$.
 
 For the rest of the paper, we always assume Assumptions \ref{feasible-assumption}-\ref{compact-assumption} hold without explicitly mentioning them.

 \subsection{Convexity}
 The following lemma demonstrates the convexity of $\mathcal{P}^n$ in Definition \ref{PR-def}.

\begin{lemma}\label{convex-lemma}
The performance region $\mathcal{P}^n$ specified in Definition \ref{PR-def} is convex for any $n\in\{1,2,\cdots,N\}$.
Furthermore, it is the convex hull of the set
$\left\{ \left(\widehat{f}^n(\alpha^n),~\widehat{\mathbf{g}}^n(\alpha^n)\right) : \alpha^n\in\mathcal{A}^n \right\}$ and thus compact, where $\left(\widehat{f}^n(\alpha^n),~\widehat{\mathbf{g}}^n(\alpha^n)\right)$ is specified Definition \ref{PV-def}.
\end{lemma}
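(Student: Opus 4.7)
The plan is to establish both claims simultaneously by identifying $\mathcal{P}^n$ with the convex hull of the set $V := \left\{\bigl(\widehat{f}^n(\alpha^n),~\widehat{\mathbf{g}}^n(\alpha^n)\bigr):\alpha^n\in\mathcal{A}^n\right\}$. The engine of the proof is the perspective transformation $(y,\mathbf{z},T)\mapsto(y/T,\mathbf{z}/T)$, which is well defined on $\mathcal{S}^n$ because each inter-renewal satisfies $T^n_k\geq 1$ and hence $\widehat{T}^n(\alpha^n)\geq 1>0$ for every $\alpha^n\in\mathcal{A}^n$; the denominators in Definition \ref{PV-def} therefore never vanish on $\mathcal{S}^n$.

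First I would establish $\mathcal{P}^n\subseteq\mathrm{conv}(V)$. Any point of $\mathcal{P}^n$ has the form $(y/T,\mathbf{z}/T)$ with $(y,\mathbf{z},T)\in\mathcal{S}^n$, so by definition of convex hull there exist finitely many actions $\alpha_1^n,\dots,\alpha_m^n\in\mathcal{A}^n$ and weights $p_j\geq 0$ with $\sum_j p_j=1$ satisfying $(y,\mathbf{z},T)=\sum_{j=1}^m p_j\bigl(\widehat{y}^n(\alpha_j^n),\widehat{\mathbf{z}}^n(\alpha_j^n),\widehat{T}^n(\alpha_j^n)\bigr)$. Then setting $q_j := p_j \widehat{T}^n(\alpha_j^n)/\sum_{i=1}^m p_i \widehat{T}^n(\alpha_i^n)$, which is nonnegative and sums to $1$, the very same reweighting already exhibited in the discussion after \eqref{DPP-ratio-simple} gives $y/T=\sum_j q_j \widehat{f}^n(\alpha_j^n)$ and $\mathbf{z}/T=\sum_j q_j \widehat{\mathbf{g}}^n(\alpha_j^n)$, placing $(y/T,\mathbf{z}/T)$ in $\mathrm{conv}(V)$.

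For the reverse inclusion, trivially $V\subseteq\mathcal{P}^n$ (pick any single action with weight $1$), so it suffices to prove that $\mathcal{P}^n$ is itself convex. Given two points $(y_i/T_i,\mathbf{z}_i/T_i)\in\mathcal{P}^n$ with $(y_i,\mathbf{z}_i,T_i)\in\mathcal{S}^n$ for $i=1,2$ and $\lambda\in[0,1]$, I would invert the perspective map: let $c:=\bigl(\lambda/T_1+(1-\lambda)/T_2\bigr)^{-1}$ and $\mu_1:=\lambda c/T_1$, $\mu_2:=(1-\lambda)c/T_2$. These weights lie in $[0,1]$ and sum to $1$, hence convexity of $\mathcal{S}^n$ implies $\mu_1(y_1,\mathbf{z}_1,T_1)+\mu_2(y_2,\mathbf{z}_2,T_2)\in\mathcal{S}^n$, and a direct calculation shows that applying the perspective map to this point returns exactly $\lambda(y_1/T_1,\mathbf{z}_1/T_1)+(1-\lambda)(y_2/T_2,\mathbf{z}_2/T_2)$.

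Compactness of $\mathcal{P}^n=\mathrm{conv}(V)$ then follows from Assumption \ref{compact-assumption}, which asserts that $V$ is compact, combined with the standard fact that the convex hull of a compact subset of finite-dimensional Euclidean space is compact (via Carath\'eodory's theorem and compactness of the probability simplex). The only nontrivial ingredient is the perspective-map inversion used in the convexity step; everything else is bookkeeping that already appears in the preceding discussion of \eqref{DPP-ratio-simple}.
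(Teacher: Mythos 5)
Your proof is correct and takes essentially the same route as the paper's: the same perspective-map reweighting $q_j \propto p_j\widehat{T}^n(\alpha_j^n)$ for the inclusion $\mathcal{P}^n\subseteq\mathrm{conv}(V)$, the same inverse change of variables (your $\mu_1,\mu_2$ coincide with the paper's $p,1-p$) for convexity of $\mathcal{P}^n$, and the same appeal to compactness of the convex hull of a compact set in finite dimensions; you merely reorder the two halves.
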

\begin{proof}
We first prove the convexity of $\mathcal{P}^n$.
Consider any two points $(f_1,\mathbf{g}_1),~(f_2,\mathbf{g}_2)\in\mathcal{P}^n$. We aim to show that for any $q\in(0,1)$, $(qf_1+(1-q)f_2,q\mathbf{g}_1+(1-q)\mathbf{g}_2)\in\mathcal{P}^n$. Notice that by definition of $\mathcal{P}^n$, there exists $(y_1,\mathbf{z}_1,T_1),~(y_2,\mathbf{z}_2,T_2)\in\mathcal{S}^n$ such that $f_1=y_1/T_1$, $\mathbf{g}_1=\mathbf{z}_1/T_1$, $f_2=y_2/T_2$, and $\mathbf{g}_2=\mathbf{z}_2/T_2$. Thus, it is enough to show 
\begin{equation}\label{convex-combo}
\left(q\frac{y_1}{T_1}+(1-q)\frac{y_2}{T_2},q\frac{\mathbf{z}_1}{T_1}+(1-q)\frac{\mathbf{z}_2}{T_2}\right)\in\mathcal{P}^n.
\end{equation}
To show this, we make a change of variable by letting $p=\frac{qT_2}{(1-q)T_1+qT_2}$. It is obvious that $p\in(0,1)$. Furthermore, $q=\frac{pT_1}{pT_1+(1-p)T_2}$ and 
\begin{align*}
&q\frac{y_1}{T_1}+(1-q)\frac{y_2}{T_2}=\frac{py_1+(1-p)y_2}{pT_1+(1-p)T_2},\\
&q\frac{\mathbf{z}_1}{T_1}+(1-q)\frac{\mathbf{z}_2}{T_2}
=\frac{p\mathbf{z}_1+(1-p)\mathbf{z}_2}{pT_1+(1-p)T_2}.
\end{align*}
Since $\mathcal{S}^n$ is convex, 
$$(py_1+(1-p)y_2,~p\mathbf{z}_1+(1-p)\mathbf{z}_2
,~pT_1+(1-p)T_2)\in\mathcal{S}^n.$$
Thus, by definition of $\mathcal{P}^n$ again, \eqref{convex-combo} holds and the first part of the proof is finished.

To show the second part of the claim, let 
$$\mathcal{Q}^n: = \left\{ \left(\widehat{f}^n(\alpha^n),~\widehat{\mathbf{g}}^n(\alpha^n)\right) : \alpha^n\in\mathcal{A}^n \right\}
= \left\{ \left(\widehat{y}^n(\alpha^n)\left/\widehat{T}^n(\alpha^n)\right.,~\widehat{\mathbf{z}}^n(\alpha^n)\left/\widehat{T}^n(\alpha^n)\right)\right. : \alpha^n\in\mathcal{A}^n \right\}$$ 
and let 
$\text{conv}(\mathcal{Q}^n)$ be the convex hull of $\mathcal{Q}^n$. First of all, By Definition \ref{PR-def},
\[\mathcal{P}^n=\left\{\left(y/T,~\mathbf{z}/T\right):~(y,\mathbf{z},T)\in\mathcal{S}^n\right\}\subseteq\mathbb{R}^{L+1},\]
for $\mathcal{S}^n$ being the convex hull of $\left\{\left(\widehat{y}^n(\alpha^n),~\widehat{\mathbf{z}}^n(\alpha^n),~\widehat{T}^n(\alpha^n)\right):~\alpha^n\in\mathcal{A}^n\right\}$, thus, in view of the definition of $\mathcal{Q}^n$, we have $\mathcal{Q}^n\subseteq\mathcal{P}^n$.
Since both 
$\mathcal{P}^n$ and $\text{conv}(\mathcal{Q}^n)$ are convex, by definition of convex hull (\cite{rockafellar2015convex}) that $\text{conv}(\mathcal{Q}^n)$ is the smallest convex set containing $\mathcal{Q}^n$,
 we have 
$\text{conv}(\mathcal{Q}^n)\subseteq\mathcal{P}^n$.

To show the reverse inclusion $\mathcal{P}^n\subseteq\text{conv}(\mathcal{Q}^n)$, note that any point in 
$\mathcal{P}^n$ can be written in the form $\left( \frac{y}{T},\frac{\mathbf{z}}{T} \right)$, where
$(y,\mathbf{z},T)\in\mathcal{S}^n$. Since $\mathcal{S}^n$ by definition is the convex hull of 
$$\left\{\left(\widehat{y}^n(\alpha^n),~\widehat{\mathbf{z}}^n(\alpha^n),~\widehat{T}^n(\alpha^n)\right):~\alpha^n\in\mathcal{A}^n\right\}\subseteq\mathbb{R}^{L+2},$$
by the definition of convex hull, $(y,\mathbf{z},T)$ can be written as a convex combination of
$m$ points in the above set. Let 
$\left\{\left(\widehat{y}^n(\alpha^n_i),~\widehat{\mathbf{z}}^n(\alpha^n_i),~\widehat{T}^n(\alpha^n_i)\right)\right\}_{i=1}^m$ be these points, so that
\begin{align*}
&(y,\mathbf{z},T) = \sum_{i=1}^m p_i\cdot\left(\widehat{y}^n(\alpha^n_i),~\widehat{\mathbf{z}}^n(\alpha^n_i),~\widehat{T}^n(\alpha^n_i)\right),\\
&p_i\geq0,~~\sum_{i=1}^mp_i = 1.
\end{align*}
As a result, we have
\[
\left( \frac{y}{T},\frac{\mathbf{z}}{T} \right)
=\left( \frac{\sum_{i=1}^m p_iy^n(\alpha^n_i)}{\sum_{i=1}^m p_iT^n(\alpha^n_i)},\frac{\sum_{i=1}^m p_i\mathbf{z}^n(\alpha^n_i)}{\sum_{i=1}^m p_iT^n(\alpha^n_i)} \right).
\]
We make a change of variable by letting $q_j = \frac{p_jT^n(\alpha^n_j)}{\sum_{i=1}^m p_iT^n(\alpha^n_i)},~\forall j=1,2,\cdots,m$, then, 
$$p_j = \frac{q_j}{T^n(\alpha^n_j)}\cdot\sum_{i=1}^m p_iT^n(\alpha^n_i),$$
it follows,
\[
\left( \frac{y}{T},\frac{\mathbf{z}}{T} \right)
=\sum_{i=1}^m q_i\cdot\left( \frac{ y^n(\alpha^n_i)}{T^n(\alpha^n_i)},\frac{\mathbf{z}^n(\alpha^n_i)}{T^n(\alpha^n_i)} \right) = \sum_{i=1}^m q_i\cdot\left(\widehat{f}^n(\alpha^n_i),~\widehat{\mathbf{g}}^n(\alpha^n_i)\right).
\]
Since $\sum_{i=1}^mq_i = 1$ and $q_i\geq0$, it follows any point in $\mathcal{P}^n$ can be written as a convex combination of finite number of points in $ \mathcal{Q}^n$, which implies 
$\mathcal{P}^n\subseteq\text{conv}(\mathcal{Q}^n)$. Overall, we have $\mathcal{P}^n=\text{conv}(\mathcal{Q}^n)$. 

Finally, by Assumption \ref{compact-assumption}, we have $\mathcal{Q}^n=\left\{ \left(\widehat{f}^n(\alpha^n),~\widehat{\mathbf{g}}^n(\alpha^n)\right) : \alpha^n\in\mathcal{A}^n \right\}$ is compact. Thus, $\mathcal{P}^n$, being a convex hull of a compact set, is also compact.
\end{proof}

\subsection{Key-feature inequality and supermartingale construction}\label{sec-4.2}
First of all, we have the following fundamental performance lemma which states that the optimality of \eqref{prob-1}-\eqref{prob-2} is achievable within $\mathcal{P}^n$ specified in Definition \ref{PR-def}.
\begin{lemma}\label{stationary-lemma}
For each $n\in\{1,2,\cdots,N\}$, there exists a pair 
$\left(\overline{f}^n_*,~\overline{\mathbf{g}}^n_*\right)\in\mathcal{P}^n$ such that the following hold:
\begin{align*}
&\sum_{n=1}^N\overline{f}^n_*=f_*\\
&\sum_{n=1}^N\overline{g}^n_{l,*}\leq d_l,~l\in\{1,2,\cdots,L\},
\end{align*}
where $f^*$ is the optimal objective value for problem \eqref{prob-1}-\eqref{prob-2}, i.e. the optimality is achievable within $\otimes_{n=1}^N\mathcal{P}^n$, the Cartesian product of $\mathcal{P}^n$. 

Furthermore, for any $\left(\overline{f}^n,~\overline{\mathbf{g}}^n\right)\in\mathcal{P}^n,~n\in\{1,2,\cdots, N\}$, satisfying 
$\sum_{n=1}^N\overline{g}^n_{l}\leq d_l,~l\in\{1,2,\cdots,L\}$, we have $\sum_{n=1}^N\overline{f}^n\geq f_*$, i.e. one cannot achieve better performance than \eqref{prob-1}-\eqref{prob-2} in $\otimes_{n=1}^N\mathcal{P}^n$.
\end{lemma}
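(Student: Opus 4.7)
The statement has two halves: (a) achievability of $f_*$ inside $\otimes_{n=1}^N \mathcal{P}^n$, and (b) a matching converse saying nothing in this region can beat $f_*$. The plan is to dispatch (b) first by building an explicit randomized stationary frame policy, and then obtain (a) by extracting a subsequential limit of per-frame averages, exploiting the compactness of each $\mathcal{P}^n$ provided by Lemma \ref{convex-lemma}.

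For (b), I would fix any $(\overline{f}^n, \overline{\mathbf{g}}^n) \in \mathcal{P}^n$ with $\sum_n \overline{g}^n_l \leq d_l$. By Definition \ref{PR-def} there exist representatives $(y^n, \mathbf{z}^n, T^n) \in \mathcal{S}^n$ with $\overline{f}^n = y^n/T^n$ and $\overline{\mathbf{g}}^n = \mathbf{z}^n/T^n$, and Carath\'eodory's theorem (applied in $\mathbb{R}^{L+2}$) writes each $(y^n, \mathbf{z}^n, T^n)$ as a finite convex combination $\sum_i p^n_i \bigl(\widehat{y}^n(\alpha^n_i), \widehat{\mathbf{z}}^n(\alpha^n_i), \widehat{T}^n(\alpha^n_i)\bigr)$ of extreme action-performance triples. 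I would then run the i.i.d.\ frame policy that, at the start of each frame of system $n$, independently samples $\alpha^n_i$ with probability $p^n_i$. Each system is now an i.i.d.\ renewal-reward process whose frame lengths satisfy the uniform second-moment bound \eqref{residual-life-bound}, so the renewal reward theorem identifies the long-run time averages of $y^n[t]$ and $z^n_l[t]$ as $\overline{f}^n$ and $\overline{g}^n_l$, respectively. Feasibility follows from the hypothesis on $\{\overline{g}^n_l\}$ together with $\expect{d_l[t]} = d_l$, and optimality of $f_*$ forces $\sum_n \overline{f}^n \geq f_*$.

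For (a), I would take a sequence of feasible policies whose objectives approach $f_*$, and along each one, for every horizon $T$ and system $n$, form the per-frame averages
\[
\overline{y}^n_{(T)} = \frac{1}{K^n_T}\sum_{k=0}^{K^n_T-1}\mathbb{E}\Bigl[\sum_{t\in\mathcal{T}^n_k} y^n[t]\Bigr],
\]
and analogous $\overline{\mathbf{z}}^n_{(T)}$ and $\overline{T}^n_{(T)}$, where $K^n_T$ counts completed frames of system $n$ in $[0,T]$. The renewal property pins each frame's conditional mean triple into the generating set of $\mathcal{S}^n$, so the averages themselves lie in $\mathcal{S}^n$, and their ratios lie in $\mathcal{P}^n$. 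Compactness of $\mathcal{P}^n$ from Lemma \ref{convex-lemma}, combined with a diagonal extraction across $n \in \{1,\ldots,N\}$ and across the approximating sequence, delivers a subsequence along which these ratio points converge to some $(\overline{f}^n_*, \overline{\mathbf{g}}^n_*) \in \mathcal{P}^n$. Relating the per-frame averages to the slot-indexed averages $\frac1T\sum_{t=0}^{T-1}\mathbb{E}[y^n[t]]$ and $\frac1T\sum_{t=0}^{T-1}\mathbb{E}[z^n_l[t]]$ then yields $\sum_n \overline{f}^n_* = f_*$ and $\sum_n \overline{g}^n_{l,*} \leq d_l$.

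The principal technical obstacle is precisely this last bridge between per-frame averages and per-slot time averages, magnified by asynchrony across the $N$ systems: at any horizon $T$ each system has a truncated partial frame, and different systems have different (random) numbers of completed frames. The residual-lifetime bound \eqref{residual-life-bound} of Assumption \ref{bounded-assumption} is what makes this step work. Combined with the pathwise uniform bounds on $y^n$ and $\mathbf{z}^n$, it forces the contribution of the truncated boundary frame to be $o(T)$, so that $\frac{1}{T}\sum_{t=0}^{T-1}\mathbb{E}[y^n[t]]$ and $\overline{y}^n_{(T)}/\overline{T}^n_{(T)}$ (and the analogous $z$-versions) coincide in the limit. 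Once that is in place, the subsequential limit lives in $\otimes_{n=1}^N \mathcal{P}^n$, realizes $f_*$, and respects the constraints, finishing (a).
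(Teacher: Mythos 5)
Your overall architecture coincides with the paper's: split into achievability (a) and a converse (b); prove (b) by constructing a randomized stationary frame policy from a convex-hull representation and invoking the renewal reward theorem; prove (a) by forming time averages along a near-optimal policy and extracting a subsequential limit inside the compact set $\otimes_{n=1}^N\mathcal{P}^n$. You also correctly single out both the decisive technical bridge (relating completed-frame statistics to slot-indexed averages, with the residual-lifetime bound controlling the truncated boundary frame) and the role of compactness from Lemma~\ref{convex-lemma}. Your use of an approximating sequence of nearly optimal policies in (a) is in fact slightly more careful than the paper's wording, which implicitly assumes a policy attaining the infimum.

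There is, however, a genuine gap in your formulation of (a). You define
$\overline{y}^n_{(T)} = \frac{1}{K^n_T}\sum_{k=0}^{K^n_T-1}\mathbb{E}\bigl[\sum_{t\in\mathcal{T}^n_k}y^n[t]\bigr]$
with $K^n_T$ the (random) number of completed frames of system $n$ by slot $T$. As written this does not parse into a single real number: the upper limit $K^n_T$ and the prefactor $1/K^n_T$ are random, while the summand carries its own expectation, so the expression is a random variable rather than a candidate member of $\mathcal{S}^n$. Replacing it by the Wald-type ratio $\mathbb{E}\bigl[\sum_{k<K^n_T}\sum_{t\in\mathcal{T}^n_k}y^n[t]\bigr]\big/\mathbb{E}[K^n_T]$ is the right instinct, but membership of that ratio in $\mathcal{S}^n$ is not immediate, because the averaging weights are entangled with the random stopping index and with the (possibly history-dependent) choice of actions. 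The paper circumvents this entirely by introducing the slot-indexed, frame-constant auxiliary processes $f^n[t]=\widehat{f}^n(\alpha^n_k)$ and $\mathbf{g}^n[t]=\widehat{\mathbf{g}}^n(\alpha^n_k)$ for $t\in\mathcal{T}^n_k$: for \emph{every} slot $t$, $(f^n[t],\mathbf{g}^n[t])\in\mathcal{P}^n$ deterministically, so $\bigl(\frac1T\sum_t\mathbb{E}[f^n[t]],\frac1T\sum_t\mathbb{E}[\mathbf{g}^n[t]]\bigr)\in\mathcal{P}^n$ by convexity with no frame-boundary bookkeeping at all. The bridge back to $\frac1T\sum_t\mathbb{E}[y^n[t]]$ and $\frac1T\sum_t\mathbb{E}[z^n_l[t]]$ is then the $O(1/T)$ bound of Lemma~\ref{bound-lemma-1}, whose proof is exactly the martingale plus optional-stopping argument you sketch. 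If you swap your per-frame averages for the paper's slot-indexed auxiliary processes, the remainder of your argument, including the diagonal extraction and the use of Lemma~\ref{bound-lemma-1}, closes cleanly.
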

The proof of this Lemma is delayed to Appendix \ref{appendix-proof}. In particular, the proof uses the following lemma, which also plays an important role in several lemmas later.

\begin{lemma}\label{bound-lemma-1}
Suppose $\{y^n[t]\}_{t=0}^\infty$, $\{\mathbf{z}^n[t]\}_{t=0}^\infty$ and $\{T^n_k\}_{k=0}^\infty$ are processes resulting from any algorithm,\footnote{Note that this algorithm might make decisions using the past information.} then, $\forall T\in\mathbb{N}$,
\begin{align}
&\frac1T\sum_{t=0}^{T-1}\expect{f^n[t]-y^n[t]}\leq\frac{B_1}{T},\label{bound-1}\\
&\frac1T\sum_{t=0}^{T-1}\expect{g^n_l[t]-z^n_l[t]}\leq\frac{B_2}{T},~l\in\{1,2,\cdots,L\},
\label{bound-2}
\end{align}
where $B_1=2y_{\max}\sqrt{B}$, $B_2=2z_{\max}\sqrt{B}$ and $f^n[t]$, $\mathbf{g}^n[t]$ are constant over each renewal frame for system $n$ defined by
\begin{align*}
f^n[t]=\widehat{f}^n(\alpha^n),~~\textrm{if}~t\in\mathcal{T}^n_k,\alpha^n_k=\alpha^n\\
\mathbf{g}^n[t]=\widehat{\mathbf{g}}^n(\alpha^n),~~\textrm{if}~t\in\mathcal{T}^n_k,
\alpha^n_k=\alpha^n,
\end{align*}
and $\left(\widehat{f}^n(\alpha^n),\widehat{\mathbf{g}}^n(\alpha^n)\right)$ are defined in Definition \ref{PV-def}.
\end{lemma}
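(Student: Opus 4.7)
I would prove Lemma \ref{bound-lemma-1} by constructing a martingale at the renewal epochs and then reducing the desired bound to a residual-life estimate in the single frame that straddles slot $T-1$. The key object is the per-frame centered quantity
\[
\delta^n_k := T^n_k\,\widehat{f}^n(\alpha^n_k) - \sum_{t\in\mathcal{T}^n_k} y^n[t],
\]
which by the renewal property and Definition \ref{PV-def} satisfies $\expect{\delta^n_k\mid\mathcal{F}[t^n_k]} = \widehat{T}^n(\alpha^n_k)\widehat{f}^n(\alpha^n_k) - \widehat{y}^n(\alpha^n_k) = 0$, since $\alpha^n_k$ is $\mathcal{F}[t^n_k]$-measurable. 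Thus the frame-indexed partial sums $M_j := \sum_{k=0}^{j-1}\delta^n_k$ form a martingale with respect to $\mathcal{G}_j := \mathcal{F}[t^n_j]$.

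Setting $K(T) := \max\{k:\,t^n_k \leq T-1\}$, the plan is to complete the partially-covered final frame and subtract off the overflow:
\[
\sum_{t=0}^{T-1}\bigl(f^n[t]-y^n[t]\bigr) \;=\; \sum_{k=0}^{K(T)}\delta^n_k \;-\; \sum_{t=T}^{t^n_{K(T)+1}-1}\bigl(f^n[t]-y^n[t]\bigr).
\]
The first sum equals $M_\tau$ at the stopping time $\tau := K(T)+1$, which is bounded above by $T$ since every frame has length at least one; hence the optional stopping theorem applies and gives $\expect{M_\tau}=0$.

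What remains is to bound the residual term. Since $|f^n[t] - y^n[t]|\leq 2y_{\max}$, its magnitude is at most $2y_{\max}\cdot(t^n_{K(T)+1}-T)$, the residual life at slot $T$. I would bound $\expect{t^n_{K(T)+1}-T}$ by conditioning on the action $\alpha^n_{K(T)}=\alpha^n$ of the in-progress frame and its elapsed time $s := T - t^n_{K(T)}\geq 1$: by the renewal property, $T^n_{K(T)}-s$ is then distributed as $T^n_k - s$ given $\{\alpha^n_k=\alpha^n,\,T^n_k\geq s\}$, to which inequality \eqref{residual-life-bound} of Assumption \ref{bounded-assumption} together with Jensen's inequality applies to give a conditional mean at most $\sqrt{B}$. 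Aggregating over these conditioning events yields $\expect{t^n_{K(T)+1}-T}\leq\sqrt{B}$, so that $\expect{\sum_{t=0}^{T-1}(f^n[t]-y^n[t])}\leq 2y_{\max}\sqrt{B}=B_1$, which proves \eqref{bound-1} after dividing by $T$. The proof of \eqref{bound-2} is identical, with $z^n_l$ and $z_{\max}$ replacing $y^n$ and $y_{\max}$.

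The main obstacle I expect is the residual-life step: because longer frames are more likely to contain any given slot (inspection paradox), one cannot substitute an unconditional mean frame length and must carefully leverage the conditional second-moment bound \eqref{residual-life-bound} on the in-progress frame together with its action. The optional-stopping step is comparatively routine because $\tau \leq T$ deterministically, making the required integrability trivial.
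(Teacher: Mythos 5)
Your argument is correct and follows essentially the same route as the paper: the per-frame centered quantity $\delta^n_k = \sum_{t\in\mathcal{T}^n_k}\bigl(f^n[t]-y^n[t]\bigr)$ has zero conditional mean because $\widehat{T}^n\widehat{f}^n=\widehat{y}^n$ and the renewal property makes the frame outcomes independent of the past given $\alpha^n_k$; the partial sums therefore form a martingale with respect to the renewal-indexed filtration; optional stopping at the (deterministically bounded) stopping time covering slot $T-1$ kills the martingale term; and the leftover residual is controlled via the conditional second-moment bound on the residual lifetime together with Jensen, yielding $2y_{\max}\sqrt{B}$. The only cosmetic differences are that the paper indexes the stopping time as $S^n[T]$ (number of renewals up to $T$) rather than your $K(T)+1=S^n[T-1]$, and that the paper takes care to define the stopped $\sigma$-algebra $\mathcal F^n_k$ explicitly (Lemma~\ref{lemma:filtration}) where you write $\mathcal F[t^n_j]$ informally; neither difference is substantive.
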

The proof of this lemma is delayed to Appendix \ref{appendix-proof}.

\begin{remark}
Note that directly computing $\overline{f}^n_*$ and $\overline{g}^n_{l,*}$ indicated by Lemma \ref{stationary-lemma} would be difficult because of the fractional nature of $\mathcal{P}^n$, the coupling between different systems through time average constraints and the fact that $d_l=\expect{d_l[t]}$ might be unknown. However, Lemma \ref{stationary-lemma} can be used to prove important performance theorems regarding our proposed algorithm as is indicated by the following lemma.
\end{remark}

The following key-feature inequality connects our proposed algorithm with the performance vectors inside $\mathcal{P}^n$.

\begin{lemma}\label{key-feature}
Consider the stochastic processes $\{y^n[t]\}_{t=0}^\infty$, $\{\mathbf{z}^n[t]\}_{t=0}^\infty$, and 
$\{T^n_k\}_{k=0}^\infty$ resulting from the proposed algorithm. For any system $n$, the following holds for any $k\in\mathbb{N}$ and any 
$(\overline{f}^n,\overline{\mathbf{g}}^n)\in\mathcal{P}^n$,
\begin{align}\label{key-feature-in}
\frac{\expect{\left.\sum_{t\in\mathcal{T}^n_k}\left(Vy^n[t]+\dotp{\mathbf{Q}[t^n_k]}{\mathbf{z}^n[t]}\right)\right|\mathbf{Q}[t^n_k]}}{\expect{T^n_k|\mathbf{Q}[t^n_k]}}\leq V\overline{f}^n+\dotp{\mathbf{Q}[t^n_k]}{\overline{\mathbf{g}}^n},
\end{align}
\end{lemma}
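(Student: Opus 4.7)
The plan is to exploit two facts: by the renewal property, the ratio on the LHS equals the deterministic quantity $V\widehat{f}^n(\alpha^n_k) + \dotp{\mathbf{Q}[t^n_k]}{\widehat{\mathbf{g}}^n(\alpha^n_k)}$ (the simplification \eqref{DPP-ratio-simple} already noted in the paper), and the algorithm chooses $\alpha^n_k$ to be a minimizer of exactly this expression over $\mathcal{A}^n$. Consequently the LHS equals $\min_{\alpha^n\in\mathcal{A}^n}\bigl[V\widehat{f}^n(\alpha^n) + \dotp{\mathbf{Q}[t^n_k]}{\widehat{\mathbf{g}}^n(\alpha^n)}\bigr]$, i.e.\ the minimum of a linear functional on the generator set $\mathcal{Q}^n = \{(\widehat{f}^n(\alpha^n),\widehat{\mathbf{g}}^n(\alpha^n)) : \alpha^n\in\mathcal{A}^n\}$ of $\mathcal{P}^n$.

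First I would carefully verify the ratio simplification: condition both the numerator and denominator on $\alpha^n_k$ in addition to $\mathbf{Q}[t^n_k]$, invoke the renewal property from Assumption \ref{bounded-assumption} which says the conditional joint law of $T^n_k$, $\mathbf{z}^n[t]$, $y^n[t]$ on the frame depends only on $\alpha^n_k$, and then apply the definitions of $\widehat{y}^n$, $\widehat{\mathbf{z}}^n$, $\widehat{T}^n$ to recover the ratio in \eqref{DPP-ratio-simple}. Because the algorithm picks $\alpha^n_k$ as a $\mathbf{Q}[t^n_k]$-measurable selection from the argmin (which is nonempty by Assumption \ref{compact-assumption}), any outer averaging over $\alpha^n_k$ is superfluous and the LHS equals the minimum value $D^n_k$.

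Next, for a target $(\overline{f}^n,\overline{\mathbf{g}}^n) \in \mathcal{P}^n$, I would use Lemma \ref{convex-lemma} to write it as a finite convex combination $\sum_{i=1}^m p_i(\widehat{f}^n(\alpha^n_i),\widehat{\mathbf{g}}^n(\alpha^n_i))$ of points of $\mathcal{Q}^n$. Linearity of $(f,\mathbf{g}) \mapsto Vf + \dotp{\mathbf{Q}[t^n_k]}{\mathbf{g}}$ in its argument then yields
\[
V\overline{f}^n + \dotp{\mathbf{Q}[t^n_k]}{\overline{\mathbf{g}}^n} = \sum_{i=1}^m p_i\Bigl(V\widehat{f}^n(\alpha^n_i) + \dotp{\mathbf{Q}[t^n_k]}{\widehat{\mathbf{g}}^n(\alpha^n_i)}\Bigr) \geq \min_{\alpha^n\in\mathcal{A}^n}\Bigl(V\widehat{f}^n(\alpha^n) + \dotp{\mathbf{Q}[t^n_k]}{\widehat{\mathbf{g}}^n(\alpha^n)}\Bigr),
\]
because a convex combination always dominates the minimum of its entries. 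Combining with the first step gives the claimed inequality.

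The proof is essentially bookkeeping once the correct interpretations are in place. The only subtlety is verifying that the conditional expectation on the LHS collapses correctly when $\alpha^n_k$ is random but $\mathbf{Q}[t^n_k]$-measurable; compactness guarantees a measurable minimizer exists so one can reduce to the deterministic case. The structural content of the lemma — that the linear surrogate $V\widehat{f} + \dotp{\mathbf{Q}}{\widehat{\mathbf{g}}}$ has the same minimum over $\mathcal{Q}^n$ and over its convex hull $\mathcal{P}^n$ — is precisely what allows the algorithm to compete against any stationary target in $\mathcal{P}^n$, including the optimal one promised by Lemma \ref{stationary-lemma}.
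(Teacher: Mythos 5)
Your proof is correct and takes essentially the same approach as the paper's. The only cosmetic difference is that the paper establishes the inequality $Vy + \dotp{\mathbf{Q}[t^n_k]}{\mathbf{z}} - D^n_k T \geq 0$ on all of $\mathcal{S}^n$ by linearity and then divides by $T$, whereas you invoke the already-proved identity $\mathcal{P}^n = \text{conv}(\mathcal{Q}^n)$ from Lemma \ref{convex-lemma} to write $(\overline{f}^n,\overline{\mathbf{g}}^n)$ as a convex combination of $\mathcal{Q}^n$-points directly; both steps encode the same change of variables, so this is a slightly more modular phrasing of the identical argument.
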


\begin{proof}
First of all,
since the proposed algorithm solves \eqref{DPP-ratio} over all possible decisions in $\mathcal{A}^n$, it must achieve value less than or equal to that of any action $\alpha^n\in\mathcal{A}^n$ at the same frame. This gives,
\begin{align*}
D^n_k\leq\frac{\expect{\left.\sum_{t\in\mathcal{T}^n_k}\left(Vy^n[t]+\dotp{\mathbf{Q}[t^n_k]}{\mathbf{z}^n[t]}\right)\right|\mathbf{Q}[t^n_k],\alpha^n_k=\alpha^n}}{\expect{\left.T^n_k\right|\mathbf{Q}[t^n_k],\alpha^n_k=\alpha^n}}
=\frac{V\widehat{y}^n(\alpha^n)+\dotp{\mathbf{Q}[t^n_k]}{\widehat{\mathbf{z}}^n(\alpha^n)}}{\widehat{T}^n(\alpha^n)},
\end{align*}
where $D^n_k$ is defined in \eqref{DPP-ratio} and 
the equality follows from the renewal property of the system that $T^n_k$, $\sum_{t\in\mathcal{T}^n_k}y^n[t]$ and $\sum_{t\in\mathcal{T}^n_k}\mathbf{z}^n[t]$ are conditionally independent of $\mathbf{Q}[t^n_k]$ given $\alpha^n_k=\alpha^n$.

Since $T^n_k\geq1$, this implies
\begin{align*}
\widehat{T}^n(\alpha^n)\cdot D^n_k\leq V\widehat{y}^n(\alpha^n)+\dotp{\mathbf{Q}[t^n_k]}{\widehat{\mathbf{z}}^n(\alpha^n)},
\end{align*}
thus, for any $\alpha^n\in\mathcal{A}^n$,
\[V\widehat{y}^n(\alpha^n)+\dotp{\mathbf{Q}[t^n_k]}{\widehat{\mathbf{z}}^n(\alpha^n)}
-D^n_k\cdot\widehat{T}^n(\alpha^n)\geq0.\]
Since $\mathcal{S}^n$ specified in Definition \ref{PR-def} is the convex hull of 
$\left\{(\widehat{y}^n(\alpha^n),~\widehat{\mathbf{z}}^n(\alpha^n),~\widehat{T}^n(\alpha^n)),~\alpha^n\in\mathcal{A}^n\right\}$,
it follows for any vector $(y,\mathbf{z},T)\in\mathcal{S}^n$, we have
\[Vy+\dotp{\mathbf{Q}[t^n_k]}{\mathbf{z}}
-D^n_k\cdot T\geq0.\]
Dividing both sides by $T$ and using the definition of $\mathcal{P}^n$ in Definition \ref{PR-def} give
\[D^n_k\leq V\overline{f}^n+\dotp{\mathbf{Q}[t^n_k]}{\overline{\mathbf{g}}^n},~\forall(\overline{f}^n,\overline{\mathbf{g}}^n)\in\mathcal{P}^n.\]
Finally, since $\{y^n[t]\}_{t=0}^\infty$, $\{\mathbf{z}^n[t]\}_{t=0}^\infty$, and 
$\{T^n_k\}_{k=0}^\infty$ result from the proposed algorithm and the action chosen is determined by $\mathbf{Q}[t^n_k]$ as in \eqref{DPP-ratio},
\[D^n_k=\frac{\expect{\left.\sum_{t\in\mathcal{T}^n_k}\left(Vy^n[t]+\dotp{\mathbf{Q}[t^n_k]}{\mathbf{z}^n[t]}\right)\right|\mathbf{Q}[t^n_k]}}{\expect{T^n_k|\mathbf{Q}[t^n_k]}}.\]
This finishes the proof.
\end{proof}

Our next step is to give a frame-based analysis for each system by constructing a supermartingale on the per-frame timescale. Recall that 
$\{\mathcal{F}[t]\}_{t=0}^{\infty}$ is a filtration (with $\mathcal F[t]$ representing system history during slots $\{0, \cdots, t\}$). Fix a system $n$ and  recall that $t_k^n$ is the time slot where the $k$-th renewal occurs for system $n$.  We would like to define a filtration corresponding to the random times 
$t_k^n$. To this end, define the collection of sets $\{\mathcal F_k^n\}_{k=0}^\infty$ such that for each $k$, 
\[
\mathcal F_k^n := \{A \in\mathcal F : A \cap \{t_k^n \leq t\} \in \mathcal F[t], \forall t \in \{0, 1, 2,\cdots\}\} 
\]

For example, the following set $A$ is an element of $\mathcal F_3^n$: 
\[
A = \{t_3^n=5\} \cap \{y[0]=y_0, y[1]=y_1, y[2]=y_2, y[3]=y_3, y[4]=y_4\}
\]
where $y_0,\cdots, y_4$ are specific values. Then $A \in\mathcal F_3^n$ because 
for $i \in \{0, 1, 2, 3, 4\}$ we have $A \cap \{t_3^n \leq i\} = \emptyset \in \mathcal F[i]$, and for $i \in \{5, 6, 7, \cdots\}$ we have $A \cap \{t_3^n \leq i\} = A \in \mathcal F[i]$.  The following technical lemma is proved in the appendix.

\begin{lemma}\label{lemma:filtration}
The sequence $\{\mathcal F_k^n \}_{k=0}^\infty$ is a valid filtration, i.e. 
$\mathcal F_k^n \subseteq\mathcal F_{k+1}^n ,~\forall k\geq0$. 
Furthermore, for any real-valued adapted process $\{Z^n[t-1]\}_{t=1}^\infty$ with respect to $\{\mathcal{F}[t]\}_{t=1}^\infty$,
\footnote{Meaning that for each $t$ in $\{1, 2,3, \cdots\}$, the random variable $Z^n[t-1]$ is determined by events in $\mathcal F[t]$.}  
$$\left\{G_{t^n_k}(Z^n[0],~Z^n[1],~\cdots,Z^n[t^n_k-1])\right\}_{k=1}^\infty$$ 
is also adapted to $\{\mathcal F_k^n \}_{k=1}^\infty$, where for any $t\in\mathbb{N}$, $G_t(\cdot)$ is a fixed real-valued measurable mappings.
That is, for any $k$, it holds that any measurable 
function of $(Z^n[0], \cdots, Z[t_k^n-1])$ is determined by events in $\mathcal F_k^n$. 
\end{lemma}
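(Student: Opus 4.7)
The object $\mathcal{F}_k^n$ is exactly the classical stopping time $\sigma$-algebra associated with the random time $t_k^n$, so the strategy is to first confirm that $t_k^n$ really is a stopping time with respect to $\{\mathcal{F}[t]\}$, and then derive each of the two claims by the standard pre-stopping-time arguments. Since $t_k^n = \sum_{j=0}^{k-1} T_j^n$ and each $T_j^n$ is observable by slot $t_{j+1}^n$, the event $\{t_k^n \le t\}$ depends only on information up to time $t$, so $\{t_k^n \le t\} \in \mathcal{F}[t]$ for every $t$ (taking $A = \Omega$ in the definition also shows this implicitly). A routine check from the definition also gives that each $\mathcal{F}_k^n$ is a $\sigma$-algebra.

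\textbf{Filtration property.} For $A \in \mathcal{F}_k^n$ and any $t \in \{0,1,2,\ldots\}$, I will use that frame lengths are positive integers, so $t_{k+1}^n \ge t_k^n + 1 > t_k^n$ pointwise, which implies $\{t_{k+1}^n \le t\} \subseteq \{t_k^n \le t\}$. Therefore
\[
A \cap \{t_{k+1}^n \le t\} \;=\; \bigl( A \cap \{t_k^n \le t\} \bigr) \cap \{t_{k+1}^n \le t\},
\]
where the first factor is in $\mathcal{F}[t]$ by the assumption $A \in \mathcal{F}_k^n$ and the second factor is in $\mathcal{F}[t]$ because $t_{k+1}^n$ is a stopping time. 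Hence $A \in \mathcal{F}_{k+1}^n$, proving $\mathcal{F}_k^n \subseteq \mathcal{F}_{k+1}^n$.

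\textbf{Adaptedness of $G_{t_k^n}$.} Write $Y_k := G_{t_k^n}(Z^n[0], \ldots, Z^n[t_k^n - 1])$. For any Borel set $B \subseteq \mathbb{R}$ I need to show $\{Y_k \in B\} \in \mathcal{F}_k^n$, i.e.\ $\{Y_k \in B\} \cap \{t_k^n \le t\} \in \mathcal{F}[t]$ for every $t$. I will slice on the exact value of $t_k^n$:
\[
\{Y_k \in B\} \cap \{t_k^n \le t\} \;=\; \bigcup_{s=0}^{t} \Bigl( \{ G_s(Z^n[0], \ldots, Z^n[s-1]) \in B \} \cap \{t_k^n = s\} \Bigr).
\]
On $\{t_k^n = s\}$, the random variable $Y_k$ collapses to the fixed measurable function $G_s$ of $Z^n[0], \ldots, Z^n[s-1]$, and each $Z^n[j]$ with $j \le s-1$ is $\mathcal{F}[j+1]$-measurable, hence $\mathcal{F}[s]$-measurable, by the adaptedness hypothesis on $\{Z^n[t-1]\}$. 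Therefore $\{G_s(Z^n[0], \ldots, Z^n[s-1]) \in B\} \in \mathcal{F}[s]$, and $\{t_k^n = s\} = \{t_k^n \le s\} \setminus \{t_k^n \le s-1\} \in \mathcal{F}[s] \subseteq \mathcal{F}[t]$. Each slice in the union lies in $\mathcal{F}[t]$, and the finite union does too, which yields the claim.

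The only subtle point, and really the single conceptual step, is the slicing by the exact value of $t_k^n$: it is what lets me replace the random index $t_k^n$ inside $G_{t_k^n}(\cdot)$ by a deterministic index $s$ so that ordinary measurability of $G_s$ applies. Everything else is bookkeeping with the definition of the stopping time $\sigma$-algebra.
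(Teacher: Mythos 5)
Your proof is correct and follows essentially the same route as the paper's: you establish $\mathcal{F}_k^n \subseteq \mathcal{F}_{k+1}^n$ by intersecting with the nested stopping-time events (using $t_{k+1}^n > t_k^n$), and you prove adaptedness of $G_{t_k^n}(\cdot)$ by slicing on the exact value $\{t_k^n = s\}$ so that the random index becomes deterministic and ordinary measurability of $G_s$ applies. This is precisely the decomposition the paper uses, so there is nothing to reconcile.
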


With Lemma \ref{key-feature} and Lemma \ref{lemma:filtration}, we can construct a supermartingale as follows,

\begin{lemma}\label{supMG}
Consider the stochastic processes $\{y^n[t]\}_{t=0}^\infty$, $\{\mathbf{z}^n[t]\}_{t=0}^\infty$, and 
$\{T^n_k\}_{k=0}^\infty$ resulting from the proposed algorithm. For any $(\overline{f}^n,\overline{\mathbf{g}}^n)\in\mathcal{P}^n$,
let
\begin{equation} \label{def-X}
X^n[t]:=V\left(y^n[t]-\overline{f}^n\right)+\dotp{\mathbf{Q}[t]}{\mathbf{z}^n[t]-\overline{\mathbf{g}}^n},
\end{equation}
then,
\[\expect{\left.\sum_{t\in\mathcal{T}^n_k}X^n[t]\right|\mathcal F_k^n }\leq Lz_{\max}(Nz_{\max}+d_{\max})B:=C_0,\]
where $B$, $z_{\max}$ and $d_{\max}$ are as defined in Assumption \ref{bounded-assumption}. Furthermore, define a real-valued process $\{Y^n_K\}_{K=0}^\infty$ on the frame such that $Y^n_0=0$ and
\[Y^n_K=\sum_{k=0}^{K-1}\left(\sum_{t\in\mathcal{T}^n_k}X^n[t]-C_0\right),~K\geq1.\]
Then, $\{Y^n_K\}_{K=0}^\infty$ is a supermartingale adapted to the aforementioned filtration 
$\{\mathcal F_K^n \}_{K=0}^\infty$.
\end{lemma}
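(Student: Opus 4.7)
The key idea is to split $\sum_{t\in\mathcal T_k^n} X^n[t]$ into a ``frozen-queue'' contribution $\sum_{t\in\mathcal T_k^n}\widetilde X^n[t]$ with $\widetilde X^n[t] := V(y^n[t]-\overline f^n) + \langle \mathbf Q[t^n_k],\, \mathbf z^n[t]-\overline{\mathbf g}^n\rangle$, and a correction $\sum_{t\in\mathcal T_k^n}\langle \mathbf Q[t]-\mathbf Q[t^n_k],\, \mathbf z^n[t]-\overline{\mathbf g}^n\rangle$. The frozen-queue piece is exactly what the key-feature inequality (Lemma~\ref{key-feature}) controls, since $\mathbf Q[t^n_k]$ is fixed at the start of the frame. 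The correction captures how other systems' updates drive the queues during system $n$'s frame, and is handled by pathwise Lipschitz continuity of $\mathbf Q$ in time, plus the second-moment bound in Assumption~\ref{bounded-assumption}.

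For the frozen-queue piece I would take conditional expectation given $\mathcal F_k^n$. By Lemma~\ref{lemma:filtration} both $t^n_k$ and $\mathbf Q[t^n_k]$ are $\mathcal F_k^n$-measurable, and the action $\alpha^n_k$ is a measurable function of $\mathbf Q[t^n_k]$ by construction of the algorithm. The renewal property then gives $\mathbb E[\sum_{t\in\mathcal T_k^n} y^n[t]\mid \mathcal F_k^n] = \widehat y^n(\alpha^n_k)$, and analogously for the $\mathbf z^n$ sum and for $T_k^n$. Multiplying Lemma~\ref{key-feature} through by $\widehat T^n(\alpha^n_k)\ge 1$ and rearranging yields $\mathbb E[\sum_{t\in\mathcal T_k^n}\widetilde X^n[t]\mid \mathcal F_k^n]\le 0$. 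For the correction, checking both branches of the $\max$ in \eqref{queue-update} gives $|Q_l[t+1]-Q_l[t]|\le Nz_{\max}+d_{\max}$ deterministically, so $|Q_l[t]-Q_l[t^n_k]|\le (t-t^n_k)(Nz_{\max}+d_{\max})$ for $t\in\mathcal T_k^n$. Since every point of $\mathcal P^n$ satisfies $|\overline g^n_l|\le z_{\max}$ by Lemma~\ref{convex-lemma}, one has $|z^n_l[t]-\overline g^n_l|\le 2z_{\max}$; summing over $l$ and over $t\in\mathcal T_k^n$ (using $\sum_{t\in\mathcal T_k^n}(t-t^n_k)=T_k^n(T_k^n-1)/2$) bounds the absolute correction by $Lz_{\max}(Nz_{\max}+d_{\max})\,T_k^n(T_k^n-1)$. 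Conditioning and invoking Assumption~\ref{bounded-assumption} with $s=0$ to get $\mathbb E[(T^n_k)^2\mid \mathcal F_k^n]\le B$ yields exactly $C_0$. Adding the two bounds proves the stated inequality.

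The supermartingale claim then drops out. By Lemma~\ref{lemma:filtration} every summand defining $Y^n_K$ is $\mathcal F_K^n$-measurable, so $\{Y^n_K\}$ is adapted to $\{\mathcal F^n_K\}_{K=0}^\infty$; integrability is routine from boundedness of $y^n,\mathbf z^n$, the second-moment bound on $T^n_k$, and the linear-in-$t$ bound on $\|\mathbf Q[t]\|_\infty$. Finally $\mathbb E[Y^n_{K+1}-Y^n_K\mid \mathcal F^n_K]=\mathbb E[\sum_{t\in\mathcal T_K^n} X^n[t]\mid \mathcal F^n_K]-C_0\le 0$ by what was just shown. The one subtle step is upgrading the conditioning in Lemma~\ref{key-feature} from $\mathbf Q[t^n_k]$ alone to the richer $\sigma$-algebra $\mathcal F_k^n$; this is the only place I anticipate real friction, and it is what the renewal property together with Lemma~\ref{lemma:filtration} is used for.
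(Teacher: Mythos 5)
Your proof is correct and follows essentially the same route as the paper's: the same decomposition of $X^n[t]$ into a frozen-queue piece controlled by Lemma~\ref{key-feature} (after upgrading the conditioning from $\mathbf{Q}[t^n_k]$ to $\mathcal{F}^n_k$ via the renewal property and Lemma~\ref{lemma:filtration}) and a queue-drift correction bounded by the Lipschitz growth of $\mathbf{Q}$ together with the $s=0$ case of~\eqref{residual-life-bound}, yielding $C_0$. The supermartingale step is handled identically.
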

\begin{proof}
Consider any $t\in\mathcal{T}^n_k$, then, we can decompose $X^n[t]$ as follows
\begin{align}\label{eq-decompose} 
X^n[t]=&V(y^n[t]-\overline{f}^n)+\dotp{\mathbf{Q}[t^n_k]}{\mathbf{z}^n[t]-\overline{\mathbf{g}}^n}
+\dotp{\mathbf{Q}[t]-\mathbf{Q}[t^n_k]}{\mathbf{z}^n[t]-\overline{\mathbf{g}}^n}.
\end{align}
By the queue updating rule \eqref{queue-update}, we have for any $l\in\{1,2,\cdots,L\}$ and any $t>t^n_k$,
\begin{equation}\label{queue-update-bound}
|Q_l[t]-Q_l[t^n_k]|\leq\sum_{s=t^n_k}^{t-1}\left|\sum_{m=1}^Nz^m_l[s]-d_l[t]\right|
\leq(t-t^n_k)(Nz_{\max}+d_{\max})
\end{equation}
Thus, for the last term in \eqref{eq-decompose}, by H\"{o}lder's inequality, 
\begin{align*}
\dotp{\mathbf{Q}[t]-\mathbf{Q}[t^n_k]}{\mathbf{z}^n[t]-\overline{\mathbf{g}}^n}
\leq&\|\mathbf{Q}[t]-\mathbf{Q}[t^n_k]\|_1\cdot\|\mathbf{z}^n[t]-\overline{\mathbf{g}}^n\|_{\infty}\\
\leq&\sum_{s=t^n_k}^{t-1}\left\|\sum_{m=1}^N\mathbf{z}^n[s]-\mathbf{d}[t]\right\|_1\cdot\|\mathbf{z}^n[t]-\overline{\mathbf{g}}^n\|_{\infty}\\
\leq&(t-t^n_k)L(Nz_{\max}+d_{\max})\cdot2z_{\max},
\end{align*}
where the second inequality follows from  \eqref{queue-update-bound} and the last inequality follows from the boundedness assumption (Assumption \ref{bounded-assumption}) of corresponding quantities. Substituting the above bound into \eqref{eq-decompose} gives a bound on $\expect{\left.\sum_{t\in\mathcal{T}^n_k}X^n[t]\right|\mathcal F_k^n }$ as
\begin{align}
\expect{\left.\sum_{t\in\mathcal{T}^n_k}X^n[t]\right|\mathcal F_k^n }
\leq&\expect{\left.\sum_{t\in\mathcal{T}^n_k}\left(V\left(y^n[t]-\overline{f}^n\right)+\dotp{\mathbf{Q}[t^n_k]}{\mathbf{z}^n[t]-\overline{\mathbf{g}}^n}\right)\right|\mathcal F_k^n }\nonumber\\
&+\expect{\left.\sum_{t\in\mathcal{T}^n_k}(t-t^n_k)\right|\mathcal F_k^n }
\cdot2L(Nz_{\max}+d_{\max})z_{\max}\nonumber\\
\leq&\expect{\left.\sum_{t\in\mathcal{T}^n_k}\left(V\left(y^n[t]-\overline{f}^n\right)+\dotp{\mathbf{Q}[t^n_k]}{\mathbf{z}^n[t]-\overline{\mathbf{g}}^n}\right)\right|\mathcal F_k^n }\nonumber\\
&+\expect{\left.(T^n_k)^2\right|\mathcal F_k^n }
\cdot L(Nz_{\max}+d_{\max})z_{\max},\label{bound-on-X}
\end{align}
where we use the fact that $0+1+\cdots+T^n_k-1 = (T^n_k-1)T^n_k/2\leq (T^n_k)^2$ in the last inequality.

Next, by the queue updating rule \eqref{queue-update}, $Q_l[t^n_k]$ is determined by $z_l^n[0],\cdots,z_l^n[t^n_k-1]$
($n=1,2,\cdots,N$) and $d_l[0],\cdots,d_l[t^n_k-1]$ for any $l\in\{1,2,\cdots,L\}$. Thus, by Lemma \ref{lemma:filtration},
$\mathbf{Q}[t^n_k]$ is determined by $\mathcal F_k^n $. 
For the proposed algorithm, each system makes decisions purely based on the virtual queue state $\mathbf{Q}[t^n_k]$, and
by the renewal property of each system, given the decision at the $k$-th renewal, the random quantities $T^n_k$, $\mathbf{z}^n[t]$ and $y^n[t]$,~$t\in\mathcal{T}^n_k$ are independent of the outcomes from the slots before $t^n_k$. 
This implies the following display,
\begin{align}
&\expect{\left.\sum_{t\in\mathcal{T}^n_k}\left(V\left(y^n[t]-\overline{f}^n\right)+\dotp{\mathbf{Q}[t^n_k]}{\mathbf{z}^n[t]-\overline{\mathbf{g}}^n}\right)\right|\mathcal F_k^n }\nonumber\\
&=\expect{\left.\sum_{t\in\mathcal{T}^n_k}V\left(y^n[t]-\overline{f}^n\right)\right|\mathcal F_k^n }+\dotp{\mathbf{Q}[t^n_k]}{\expect{\left.\sum_{t\in\mathcal{T}^n_k}\left(\mathbf{z}^n[t]-\overline{\mathbf{g}}^n\right)\right|~\mathcal F_k^n }}\nonumber\\
&=\expect{\left.\sum_{t\in\mathcal{T}^n_k}V\left(y^n[t]-\overline{f}^n\right)\right|\mathbf{Q}[t^n_k]}+\dotp{\mathbf{Q}[t^n_k]}{\expect{\left.\sum_{t\in\mathcal{T}^n_k}\left(\mathbf{z}^n[t]-\overline{\mathbf{g}}^n\right)\right|~\mathbf{Q}[t^n_k]}}\nonumber\\
&=\expect{\left.\sum_{t\in\mathcal{T}^n_k}\left(V\left(y^n[t]-\overline{f}^n\right)+\dotp{\mathbf{Q}[t^n_k]}{\mathbf{z}^n[t]-\overline{\mathbf{g}}^n}\right)\right|\mathbf{Q}[t^n_k]},\label{mark-1}
\end{align}
By Lemma \ref{key-feature}, we have the following:
\begin{align*}
\expect{\left.\sum_{t\in\mathcal{T}^n_k}\left(Vy^n[t]+\dotp{\mathbf{Q}[t^n_k]}{\mathbf{z}^n[t]}\right)\right|\mathbf{Q}[t^n_k]}
\leq \left(V\overline{f}^n+\dotp{\mathbf{Q}[t^n_k]}{\overline{\mathbf{g}}^n}\right)\cdot\expect{T^n_k|\mathbf{Q}[t^n_k]}.
\end{align*}
Thus, rearranging terms in above inequality gives
the expectation on the right hand side of \eqref{mark-1} is no greater than 0 and hence the first expectation on the right hand side of \eqref{bound-on-X} is also no greater than 0. For the second expectation in \eqref{bound-on-X}, using \eqref{residual-life-bound} in Assumption \ref{bounded-assumption} gives $\expect{\left.(T^n_k)^2\right|\mathcal F_k^n }\leq B$ and the first part of the lemma is proved. 

For the second part of the lemma, by Lemma \ref{lemma:filtration} and the definition of $Y^n_K$, the process $\{Y^n_K\}_{K=0}^\infty$
is adapted to $\{\mathcal F_k^n \}_{K=0}^{\infty}$.
Moreover, by Assumption \ref{bounded-assumption},
\begin{align*}
\expect{\left|\sum_{t\in\mathcal{T}^n_k}X^n[t]\right|}
\leq\expect{\sum_{t\in\mathcal{T}^n_k}\left|X^n[t]\right|}<\infty,~\forall k.
\end{align*}
Thus, $\expect{|Y^n_K|}<\infty,~\forall K\in\mathbb{N}$, i.e. it is absolutely integrable. Furthermore, by the first part of the lemma,
\begin{align*}
\expect{Y^n_{K+1}~|~\mathcal F_k^n }
=Y^n_K+\expect{\left.\left(\sum_{t\in\mathcal{T}^n_K}X^n[t]-C_0\right)~\right|~\mathcal F_k^n }
\leq Y^n_K,
\end{align*}
finishing the proof.
\end{proof}

\subsection{Synchronization lemma}\label{section:sync}
So far, we have analyzed the processes related to each individual system over its renewal frames. However, due the asynchronous behavior of different systems, the supermartingales of each system cannot be immediately summed.

In order to get a global performance bound, we have to get rid of any index related to individual renewal frames only. In other words, 
we need to look at the system property at any time slot $T$ as opposed to any renewal $t^n_k$. 
We start with the following standard definition of stopping time:

 \begin{definition}
Given a probability space $(\Omega, \mathcal{F}, P)$ and a filtration
$\{\varnothing, \Omega\}=\mathcal{F}_0\subseteq\mathcal{F}_1\subseteq\mathcal{F}_2\cdots$
in $\mathcal{F}$. A stopping time $\tau$ with respect to the filtration $\{\mathcal{F}_i\}_{i=0}^{\infty}$ is a random variable such that for any $i\in\mathbb{N}$,
\[\{\tau=i\}\in\mathcal{F}_i,\]
i.e. the stopping time occurring at time $i$ is contained in the information during slots $0,~1,~2,~\cdots,~i-1$.
\end{definition}

Next, for any fixed slot $T>0$, let $S^n[T]$ be the number of renewals up to (and including) time slot $T$, with with the convention that the first renewal occurs at time $t=0$, so $t_0^n=0$ and $S^n[0]=1$.
The next lemma shows $S^n[T]$ is a valid stopping time, whose proof is in the appendix.

\begin{lemma}\label{valid-stopping-time}
For each $n\in\{1,2,\cdots,N\}$, 
the random variable $S^n[T]$ is a stopping time
with respect to the filtration $\{\mathcal F_k^n \}_{k=0}^\infty$, i.e. $\{S^n[T]= k\}\in\mathcal F_k^n ,~\forall k\in\mathbb{N}$.
\end{lemma}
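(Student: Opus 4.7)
The plan is to unpack the definition of $\mathcal{F}_k^n$ and verify directly that $\{S^n[T]=k\}$ satisfies the required split property, using the fact that each renewal time $t_j^n$ is a stopping time with respect to the slot filtration $\{\mathcal{F}[t]\}_{t\geq 0}$.

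\textbf{Step 1: Rewrite the event.} For $k\geq 1$, the event $\{S^n[T]=k\}$ is exactly the event that the first $k$ renewals have all occurred by slot $T$ while the $k$-th renewal has not yet, i.e.
\[
\{S^n[T]=k\} \;=\; \{t_{k-1}^n \leq T\} \cap \{t_k^n \geq T+1\}.
\]
For $k=0$, the convention $S^n[0]=1$ makes $\{S^n[T]=0\}=\varnothing \in \mathcal{F}_0^n$, so only $k\geq 1$ is nontrivial.

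\textbf{Step 2: Observe each $t_j^n$ is a stopping time for $\{\mathcal{F}[t]\}$.} Because the frame durations $T_j^n$ are determined by the per-slot random variables, the event $\{t_j^n \leq t\}$ is measurable with respect to $\mathcal{F}[t]$ for every $j,t$. In particular $\{t_{k-1}^n \leq T\} \in \mathcal{F}[T]$ and $\{t_k^n \geq T+1\} = \{t_k^n \leq T\}^c \in \mathcal{F}[T]$, hence
\[
\{S^n[T]=k\} \in \mathcal{F}[T].
\]

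\textbf{Step 3: Verify the defining property of $\mathcal{F}_k^n$.} By definition, I must check that $\{S^n[T]=k\} \cap \{t_k^n \leq t\} \in \mathcal{F}[t]$ for every $t\in\{0,1,2,\ldots\}$. I split on $t$ versus $T$:
\begin{itemize}
\item If $t\leq T$, then $\{t_k^n \leq t\} \subseteq \{t_k^n \leq T\}$, which is disjoint from $\{t_k^n \geq T+1\} \supseteq \{S^n[T]=k\}$. Hence the intersection is $\varnothing \in \mathcal{F}[t]$.
\item If $t>T$, then $\mathcal{F}[T]\subseteq \mathcal{F}[t]$, so by Step 2 we have $\{S^n[T]=k\}\in \mathcal{F}[t]$; also $\{t_k^n\leq t\}\in \mathcal{F}[t]$ since $t_k^n$ is a stopping time. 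The intersection therefore lies in $\mathcal{F}[t]$.
\end{itemize}
This confirms $\{S^n[T]=k\}\in \mathcal{F}_k^n$ for every $k$, completing the argument.

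\textbf{Expected obstacle.} There is no deep technical step; the only thing to be careful about is the indexing convention ($t_0^n=0$, $S^n[0]=1$), and cleanly handling the two cases $t\leq T$ and $t>T$ in Step 3. Once the event $\{S^n[T]=k\}$ is written as an intersection involving two renewal times, the result is essentially the classical fact that the number of renewals by a deterministic time is a stopping time of the renewal-count filtration.
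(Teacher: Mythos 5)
Your proof is correct and follows essentially the same route as the paper's: write $\{S^n[T]=k\}=\{t_{k-1}^n\leq T\}\cap\{t_k^n>T\}$, use that $t_j^n$ are stopping times for the slot filtration, and split on $t\leq T$ versus $t>T$ to verify $\{S^n[T]=k\}\cap\{t_k^n\leq t\}\in\mathcal F[t]$. You additionally handle the degenerate $k=0$ case explicitly, which the paper glosses over; otherwise the arguments are the same.
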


The following theorem tells us a stopping-time truncated supermartingale is still a supermartingale.
\begin{theorem}[Theorem 5.2.6 in \cite{Durrett}]\label{stopping-time}
If $\tau$ is a stopping time and $Z[i]$ is a supermartingale with respect to $\{\mathcal{F}_i\}_{i=0}^\infty$, then $Z[i\wedge \tau]$ is also a supermartingale, where $a\wedge b\triangleq\min\{a,b\}$.
\end{theorem}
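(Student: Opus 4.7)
The plan is to verify the three conditions of a supermartingale for $\{Z[i\wedge\tau]\}_{i=0}^{\infty}$ with respect to $\{\mathcal{F}_i\}_{i=0}^{\infty}$: adaptedness, integrability, and the one-step conditional inequality. Adaptedness follows from writing $Z[i\wedge\tau]=\sum_{k=0}^{i-1}Z[k]\mathbf{1}_{\{\tau=k\}}+Z[i]\mathbf{1}_{\{\tau\geq i\}}$: each $\{\tau=k\}\in\mathcal{F}_k\subseteq\mathcal{F}_i$ by the stopping-time definition, each $Z[k]$ is $\mathcal{F}_k$-measurable hence $\mathcal{F}_i$-measurable, and $\{\tau\geq i\}=\{\tau\leq i-1\}^c\in\mathcal{F}_{i-1}\subseteq\mathcal{F}_i$. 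Integrability follows from the same decomposition and the triangle inequality, since $\mathbb{E}|Z[i\wedge\tau]|\leq\sum_{k=0}^{i}\mathbb{E}|Z[k]|<\infty$ by the assumed integrability of the supermartingale $Z$.

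The heart of the argument is the one-step decomposition
\begin{equation*}
Z[(i+1)\wedge\tau]-Z[i\wedge\tau]=\bigl(Z[i+1]-Z[i]\bigr)\mathbf{1}_{\{\tau>i\}},
\end{equation*}
which holds because on $\{\tau\leq i\}$ both sides equal zero (the min has already locked in at $\tau$), while on $\{\tau>i\}$ one has $(i+1)\wedge\tau=i+1$ and $i\wedge\tau=i$. Taking conditional expectation and using that $\{\tau>i\}\in\mathcal{F}_i$ (again from the stopping-time property), I can pull the indicator outside to obtain
\begin{equation*}
\mathbb{E}\bigl[Z[(i+1)\wedge\tau]-Z[i\wedge\tau]\,\bigl|\,\mathcal{F}_i\bigr]=\mathbf{1}_{\{\tau>i\}}\,\mathbb{E}\bigl[Z[i+1]-Z[i]\,\bigl|\,\mathcal{F}_i\bigr]\leq0,
\end{equation*}
where the final inequality uses the supermartingale property of $Z$ and the non-negativity of the indicator. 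Rearranging yields $\mathbb{E}[Z[(i+1)\wedge\tau]\mid\mathcal{F}_i]\leq Z[i\wedge\tau]$, which is the required supermartingale inequality.

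The only subtle step is justifying that the indicator $\mathbf{1}_{\{\tau>i\}}$ is $\mathcal{F}_i$-measurable and can be pulled out of the conditional expectation; this is exactly what the stopping-time hypothesis buys us, so there is no real obstacle beyond bookkeeping. Because the statement is the classical optional-stopping-style fact (Theorem~5.2.6 of Durrett) cited verbatim, the proof is a short exercise in the definitions rather than a deep argument, and the authors invoke it as a black box in what follows.
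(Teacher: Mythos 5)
Your proof is correct and is precisely the classical argument (the one-step difference decomposition $Z[(i+1)\wedge\tau]-Z[i\wedge\tau]=(Z[i+1]-Z[i])\mathbf{1}_{\{\tau>i\}}$, with $\{\tau>i\}\in\mathcal{F}_i$ used to pull the indicator out of the conditional expectation). The paper does not prove this statement itself --- it cites it verbatim as Theorem 5.2.6 of Durrett and uses it as a black box in the proofs of Lemma \ref{sync-lemma} and Lemma \ref{bound-lemma-1} --- so there is no in-paper proof to compare against; your argument coincides with the standard textbook proof being referenced.
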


With this theorem and the above stopping time construction, we have the following lemma:

\begin{lemma}\label{sync-lemma}
For each $n\in\{1,2,\cdots,N\}$ and any fixed $T\in\mathbb{N}$, we have
\begin{align*}
\frac1T\sum_{t=0}^{T-1}\expect{X^n[t]}\leq C_1+\frac{C_2V}{T},
\end{align*}
where 
\[C_1:=6Lz_{\max}(Nz_{\max}+d_{\max})B,~~C_2:=2y_{\max}\sqrt{B}.\]
\end{lemma}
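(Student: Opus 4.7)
The plan is to decompose $X^n[t]$ and handle each resulting piece with a different tool: Lemma \ref{bound-lemma-1} converts per-slot quantities to their frame averages; the key-feature inequality (Lemma \ref{key-feature}) shows that the frame averages produced by the algorithm beat $(\overline{f}^n,\overline{\mathbf{g}}^n)$ in a combined sense; a martingale optional-stopping argument handles the zero-mean fluctuations of $\mathbf{z}^n[t]$ around $\mathbf{g}^n[t]$ inside a frame; and the crude deterministic bound $\|\mathbf{Q}[t]\|_1\le tL(Nz_{\max}+d_{\max})$, inherited from $\mathbf{Q}[0]=\mathbf{0}$ together with the per-slot increment bound already used in the proof of Lemma \ref{supMG}, controls the residual overshoot at time $T$.

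Concretely, using the frame-constant processes $f^n[t]$ and $\mathbf{g}^n[t]$ from Lemma \ref{bound-lemma-1}, I would write
$$X^n[t]=V(y^n[t]-f^n[t])+V(f^n[t]-\overline{f}^n)+\langle \mathbf{Q}[t],\mathbf{z}^n[t]-\mathbf{g}^n[t]\rangle+\langle \mathbf{Q}[t],\mathbf{g}^n[t]-\overline{\mathbf{g}}^n\rangle.$$
The first term contributes at most $VB_1=VC_2$ in expectation summed over $t\in[0,T-1]$ via Lemma \ref{bound-lemma-1} (applied in both directions, the reverse being obtained by the same argument applied to $-y^n$). In the last two terms, writing $\mathbf{Q}[t]=\mathbf{Q}[t^n_{k(t)}]+(\mathbf{Q}[t]-\mathbf{Q}[t^n_{k(t)}])$ produces two frame-start sub-pieces and two ``slippages''. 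The key-feature inequality applied at $t^n_{k(t)}$ shows that $V(f^n[t]-\overline{f}^n)+\langle \mathbf{Q}[t^n_{k(t)}],\mathbf{g}^n[t]-\overline{\mathbf{g}}^n\rangle\le 0$, killing one frame-start sub-piece. For the other, $\langle \mathbf{Q}[t^n_{k(t)}],\mathbf{z}^n[t]-\mathbf{g}^n[t]\rangle$, the per-frame aggregate $\langle \mathbf{Q}[t^n_k],\sum_{t\in\mathcal{T}^n_k}(\mathbf{z}^n[t]-\mathbf{g}^n[t])\rangle$ has zero conditional expectation given $\mathcal{F}^n_k$ (since $\expect{\sum_{t\in\mathcal{T}^n_k}\mathbf{z}^n[t]\mid\mathcal{F}^n_k}=\widehat{T}^n(\alpha^n_k)\widehat{\mathbf{g}}^n(\alpha^n_k)=\expect{\sum_{t\in\mathcal{T}^n_k}\mathbf{g}^n[t]\mid\mathcal{F}^n_k}$), so partial sums form a martingale and optional stopping at the bounded stopping time $S^n[T]\le T+1$ (bounded because frames have length $\ge 1$) gives expectation zero over the first $S^n[T]$ complete frames. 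The two slippages, bounded pointwise by $2Lz_{\max}(Nz_{\max}+d_{\max})(t-t^n_{k(t)})$ and hence in aggregate by $Lz_{\max}(Nz_{\max}+d_{\max})\sum_{k=0}^{S^n[T]-1}(T^n_k)^2$, are controlled by an auxiliary supermartingale: since $\expect{(T^n_k)^2\mid\alpha^n_k}\le B$ (Assumption \ref{bounded-assumption} with $s=0$), the process $\sum_{k=0}^{K-1}((T^n_k)^2-B)$ is a supermartingale and optional stopping yields $\expect{\sum_{k=0}^{S^n[T]-1}(T^n_k)^2}\le B(T+1)$, producing a contribution of at most $2C_0T$ for each slippage.

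The main obstacle is aligning the summation domain $[0,T-1]$ with the stopped martingale: the difference is the residual overshoot $\langle \mathbf{Q}[t^n_{K-1}],\sum_{t=T}^{t^n_{S^n[T]}-1}(\mathbf{z}^n[t]-\mathbf{g}^n[t])\rangle$, and $\|\mathbf{Q}[t^n_{K-1}]\|_1$ is not a priori uniformly bounded. I would resolve this with the deterministic bound $\|\mathbf{Q}[t^n_{K-1}]\|_1\le TL(Nz_{\max}+d_{\max})$ (since $t^n_{K-1}\le T$) combined with $\|\sum_{t=T}^{t^n_{S^n[T]}-1}(\mathbf{z}^n[t]-\mathbf{g}^n[t])\|_\infty\le 2z_{\max}\tau$, where $\tau=t^n_{S^n[T]}-T$, and the residual-life estimate $\expect{\tau}\le\sqrt{\expect{\tau^2}}\le\sqrt{B}$ obtained by applying Assumption \ref{bounded-assumption} to the frame straddling $T$. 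Because $B\ge 1$, this produces an overshoot bound of $2TLz_{\max}(Nz_{\max}+d_{\max})\sqrt{B}\le 2TC_0$. Adding the four contributions $VC_2+6C_0T$ and dividing by $T$ recovers the claim with $C_1=6Lz_{\max}(Nz_{\max}+d_{\max})B$.
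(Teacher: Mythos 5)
Your proof is correct, but it takes a genuinely different route from the paper. The paper's proof leans entirely on the single supermartingale $Y^n_K$ from Lemma \ref{supMG}: that lemma already bundles the key-feature inequality and the queue slippage within a frame into one per-frame drift bound $C_0$, so the sum over complete frames is dispatched by a single optional-stopping application, and only the residual overshoot past $T$ must be estimated separately. You instead bypass Lemma \ref{supMG} and unpack $X^n[t]$ into four pieces, dispatching each with its own tool: Lemma \ref{bound-lemma-1} (applied two-sidedly) for $V(y^n[t]-f^n[t])$; the key-feature inequality applied \emph{pointwise} rather than in conditional expectation, which kills $V(f^n[t]-\overline{f}^n)+\langle\mathbf{Q}[t^n_{k(t)}],\mathbf{g}^n[t]-\overline{\mathbf{g}}^n\rangle$ deterministically (this is slightly stronger usage than the paper's, and is justified because the algorithm's choice makes $D^n_k = V\widehat{f}^n(\alpha^n_k)+\langle\mathbf{Q}[t^n_k],\widehat{\mathbf{g}}^n(\alpha^n_k)\rangle$ pointwise); a bespoke zero-drift martingale for $\sum_{t\in\mathcal{T}^n_k}\langle\mathbf{Q}[t^n_k],\mathbf{z}^n[t]-\mathbf{g}^n[t]\rangle$; and a third auxiliary supermartingale built from $(T^n_k)^2-B$ for the two slippages. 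Both routes yield exactly the same constants $C_1=6Lz_{\max}(Nz_{\max}+d_{\max})B$ and $C_2=2y_{\max}\sqrt{B}$. What your finer decomposition buys is transparency --- each of the three $2C_0T$ contributions is traceable to a specific source (overshoot, slippage of the $\overline{\mathbf{g}}^n$ term, slippage of the $\mathbf{g}^n[t]$ term) --- and it demonstrates that the result survives even if Lemma \ref{supMG} had not been stated. What the paper's approach buys is compactness and reuse: $Y^n_K$ is constructed once and invoked once. Two small notational points: you write $\mathbf{Q}[t^n_{K-1}]$ for the overshoot where you mean $\mathbf{Q}[t^n_{S^n[T]-1}]$; and the ``both directions'' use of Lemma \ref{bound-lemma-1} is most cleanly justified by observing that the process $F^n_K$ in its proof is in fact a \emph{martingale}, so $\expect{F^n_{S^n[T]}}=0$ rather than merely $\le 0$, giving the two-sided bound at once (your substitution of $-y^n$ works equally well).
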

\begin{proof}
First, note that the renewal index $k$ starts from 0. Thus,
for any fixed $T\in \mathbb{N}$, $ t^n_{S^n[T]-1}\leq T<t^n_{S^n[T]}$, and
\begin{align}
\expect{\sum_{t=0}^{T-1}X^n[t]}=&\expect{\sum_{t=0}^{t^n_{S^n[T]}-1}X^n[t]-\sum_{t=T}^{t^n_{S^n[T]}-1}X^n[t]}\nonumber\\
=&\expect{\sum_{t=1}^{t^n_{S^n[T]}-1}X^n[t]}-\expect{\sum_{t=T}^{t^n_{S^n[T]}-1}X^n[t]}\nonumber\\
=&\expect{Y^n_{S^n[T]}}+C_0\expect{S^n[T]}-\expect{\sum_{t=T}^{t^n_{S^n[T]}-1}X^n[t]}\nonumber\\
\leq&\expect{Y^n_{S^n[T]}}+C_0(T+1)-\expect{\sum_{t=T}^{t^n_{S^n[T]}-1}X^n[t]},\label{decompose-inequality}
\end{align}
where the third equality follows from the definition of $Y^n_K$ in Lemma \ref{supMG} and the last inequality follows from the fact that the number of renewals up to time slot $T$ is no more than the total number of slots, i.e.
$S^n[T]\leq T+1$. For the term $\expect{Y^n_{S^n[T]}}$, we apply Theorem \ref{stopping-time} with 
$\tau = S^n[T]$ and index $K$ to obtain
$\{Y^n_{K\wedge S^n[T]}\}_{K=0}^\infty$ is a supermartingale. This implies
\[\expect{Y^n_{K\wedge S^n[T]}}\leq\expect{Y^n_{0\wedge S^n[T]}}=\expect{Y^n_0}=0,~\forall K\in\mathbb{N}.\]
Since $S^n[T]\leq T+1$, it follows by substituting $K=T+1$,
\[\expect{Y^n_{S^n[T]}}=\expect{Y^n_{(T+1)\wedge S^n[T]}}\leq0.\]
For the last term in \eqref{decompose-inequality}, by queue updating rule \eqref{queue-update}, for any $l\in\{1,2,\cdots,L\}$,
\[|Q_l[t]|\leq\sum_{s=0}^{t-1}\left|\sum_{m=1}^Nz^m_l[s]-d_l[t]\right|
\leq t(Nz_{\max}+d_{\max}),\]
it then follows from H\"{o}lder's inequality again that
\begin{align*}
\expect{\left|\sum_{t=T}^{t^n_{S^n[T]}-1}X^n[t]\right|}
=&\expect{\left|\sum_{t=T}^{t^n_{S^n[T]}-1}\left(V(y^n[t]-\overline{f}^n)+\dotp{\mathbf{Q}[t]}{\mathbf{z}^n[t]-\overline{\mathbf{g}}^n}\right)\right|}\\
\leq&\expect{\sum_{t=T}^{t^n_{S^n[T]}-1}\left(V\left|y^n[t]-\overline{f}^n\right|+\|\mathbf{Q}[t]\|_1\cdot\|\mathbf{z}^n[t]-\overline{\mathbf{g}}^n\|_{\infty}\right)}\\
\leq&\expect{\sum_{t=T}^{t^n_{S^n[T]}-1}\left(2Vy_{\max}+L(Nz_{\max}+d_{\max})t\cdot2z_{\max}\right)}\\
=&2Vy_{\max}\cdot\expect{t^n_{S^n[T]}-T}+Lz_{\max}(Nz_{\max}+d_{\max})\\
&\cdot\left((2T-1)\cdot\expect{t^n_{S^n[T]}-T}+\expect{t^n_{S^n[T]}-T}^2\right)\\
\leq& 2Vy_{\max}\sqrt{B}+2Lz_{\max}(Nz_{\max}+d_{\max})\sqrt{B}T+Lz_{\max}(Nz_{\max}+d_{\max})B\\
\leq& 2Vy_{\max}\sqrt{B}+2Lz_{\max}(Nz_{\max}+d_{\max})B(T+1),
\end{align*} 
where in the second from last inequality we use \eqref{residual-life-bound} of Assumption \ref{bounded-assumption} that the residual life $t^n_{S^n[T]}-T$ satisfies 
$$\expect{(t^n_{S^n[T]}-T)^2}
=\expect{\expect{\left.(t^n_{S^n[T]}-T)^2\right|~t^n_{S^n[T]}-t^n_{S^n[T]-1}\geq T-t^n_{S^n[T]-1}}}\leq B$$ 
and $\expect{t^n_{S^n[T]}-T}\leq\sqrt{B}$, and in the last inequality we use the fact that $B\geq1$, thus, $\sqrt{B}\leq B$.
Substitute the above bound into \eqref{decompose-inequality} gives
 \begin{align*}
 \expect{\sum_{t=0}^{T-1}X^n[t]}\leq& C_0(T+1)+2Vy_{\max}B+2Lz_{\max}(Nz_{\max}+d_{\max})B(T+1)\\
 =&2Vy_{\max}\sqrt{B}+3Lz_{\max}(Nz_{\max}+d_{\max})B(T+1)\\
 \leq&2Vy_{\max}\sqrt{B}+6Lz_{\max}(z_{\max}+d_{\max})BT
 \end{align*}
 where we use the definition $C_0=Lz_{\max}(z_{\max}+d_{\max})B$ from Lemma \ref{supMG} in the equality and use $T+1\leq2T$ in the final equality.
 Dividing both sides by $T$ finishes the proof.
\end{proof}

\subsection{Achieving near optimality}\label{sec-4.4}
The following theorem gives the performance bound of our proposed algorithm.

\begin{theorem}
The sequences $\{y^n[t]\}_{t=0}^\infty$ and $\{\mathbf{z}^n[t]\}_{t=0}^\infty$ produced by the proposed algorithm satisfy all the constraints in \eqref{prob-2} and achieves $\mathcal{O}(1/V)$ near optimality, i.e.
\[\limsup_{T\rightarrow\infty}\frac1T\sum_{t=0}^{T-1}\sum_{n=1}^N\expect{y^n[t]}\leq f_*+\frac{NC_1+C_3}{V},\]
where $f_*$ is the optimal objective of \eqref{prob-1}-\eqref{prob-2}, $C_1$ is defined in Lemma \ref{sync-lemma} and $C_3:=(Nz_{\max}+d_{\max})^2L/2$.
\end{theorem}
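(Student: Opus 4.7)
The plan is to combine the synchronization bound from Lemma \ref{sync-lemma} with a standard Lyapunov drift analysis of $\|\mathbf{Q}[t]\|^2$, using the benchmark vector $(\overline{f}^n_*, \overline{\mathbf{g}}^n_*) \in \mathcal{P}^n$ guaranteed by Lemma \ref{stationary-lemma}. First I would specialize $X^n[t]$ in \eqref{def-X} to the choice $(\overline{f}^n, \overline{\mathbf{g}}^n) = (\overline{f}^n_*, \overline{\mathbf{g}}^n_*)$, apply Lemma \ref{sync-lemma}, and sum over $n \in \{1,\dots,N\}$ to obtain
\[
\frac{V}{T}\sum_{t=0}^{T-1}\expect{\sum_{n=1}^N y^n[t]} - V f_* + \frac{1}{T}\sum_{t=0}^{T-1}\expect{\Bigl\langle \mathbf{Q}[t],\ \sum_{n=1}^N \mathbf{z}^n[t] - \sum_{n=1}^N \overline{\mathbf{g}}^n_*\Bigr\rangle} \leq N C_1 + \frac{N C_2 V}{T},
\]
where I used $\sum_n \overline{f}^n_* = f_*$.

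Next I would carry out a one-slot drift bound for the Lyapunov function $\tfrac{1}{2}\|\mathbf{Q}[t]\|^2$. Squaring the queue update \eqref{queue-update} and using $(\max\{a,0\})^2 \leq a^2$ gives
\[
\frac{1}{2}\bigl(\|\mathbf{Q}[t+1]\|^2 - \|\mathbf{Q}[t]\|^2\bigr) \leq C_3 + \Bigl\langle \mathbf{Q}[t],\ \sum_{n=1}^N \mathbf{z}^n[t] - \mathbf{d}[t]\Bigr\rangle,
\]
with $C_3 = L(Nz_{\max}+d_{\max})^2/2$. Taking expectations and using that $\mathbf{d}[t]$ is i.i.d.\ and independent of $\mathbf{Q}[t]$ (which is measurable with respect to history through slot $t-1$), we may replace $\mathbf{d}[t]$ by $\mathbf{d}$. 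Since $\mathbf{Q}[t] \geq 0$ coordinatewise and $\sum_n \overline{g}^n_{l,*} \leq d_l$, we get
\[
\expect{\Bigl\langle \mathbf{Q}[t],\ \sum_{n=1}^N \mathbf{z}^n[t] - \sum_{n=1}^N \overline{\mathbf{g}}^n_*\Bigr\rangle} \geq \expect{\frac{1}{2}\|\mathbf{Q}[t+1]\|^2 - \frac{1}{2}\|\mathbf{Q}[t]\|^2} - C_3.
\]

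Substituting this lower bound into the summed synchronization inequality, telescoping, and using $\mathbf{Q}[0]=0$ yields
\[
\frac{V}{T}\sum_{t=0}^{T-1}\expect{\sum_{n=1}^N y^n[t]} + \frac{1}{2T}\expect{\|\mathbf{Q}[T]\|^2} \leq V f_* + (N C_1 + C_3) + \frac{N C_2 V}{T}.
\]
Dropping the nonnegative queue term, dividing by $V$, and taking $\limsup_{T\to\infty}$ delivers the claimed $\mathcal{O}(1/V)$ near optimality with constant $N C_1 + C_3$. For the constraint part, I would reuse the same telescoped inequality but instead drop $\expect{\|\mathbf{Q}[T]\|^2}$ on the left and apply the trivial bound $\sum_n y^n[t] \geq -N y_{\max}$ on the penalty term; this gives $\expect{\|\mathbf{Q}[T]\|^2} = \mathcal{O}(T)$, hence by Jensen $\expect{Q_l[T]} = \mathcal{O}(\sqrt{T})$, so $\lim_{T\to\infty} \expect{Q_l[T]}/T = 0$, and Lemma \ref{lemma:queue-bound} then secures \eqref{prob-2}.

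The main obstacle I anticipate is the bridging step: the synchronization lemma is phrased in terms of the inner product $\langle \mathbf{Q}[t], \sum_n \mathbf{z}^n[t] - \sum_n \overline{\mathbf{g}}^n_*\rangle$, while the drift naturally produces the inner product against $\sum_n \mathbf{z}^n[t] - \mathbf{d}[t]$. Reconciling these two requires both the independence of $\mathbf{d}[t]$ from the virtual-queue state and the coordinatewise feasibility $\sum_n \overline{\mathbf{g}}^n_* \leq \mathbf{d}$ from Lemma \ref{stationary-lemma}, combined with nonnegativity of $\mathbf{Q}[t]$; verifying each of these carefully is where the argument must be tight.
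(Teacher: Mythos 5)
Your proposal is correct and follows essentially the same route as the paper: it combines Lemma \ref{sync-lemma} (summed over $n$, specialized to $(\overline{f}^n_*,\overline{\mathbf{g}}^n_*)$ from Lemma \ref{stationary-lemma}) with the one-slot Lyapunov drift bound on $\tfrac12\|\mathbf{Q}[t]\|^2$, telescopes to the paper's inequality \eqref{final-dpp}, drops the nonnegative queue term for the objective, and uses Jensen plus Lemma \ref{lemma:queue-bound} for the constraints. One small wording slip: in the constraint step you should \emph{retain} $\expect{\|\mathbf{Q}[T]\|^2}$ (and lower-bound the penalty term by $-NVy_{\max}$), not drop it, but your subsequent conclusion $\expect{\|\mathbf{Q}[T]\|^2}=\mathcal{O}(T)$ makes clear that is what you intended.
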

\begin{proof}
Define the drift-plus-penalty expression at time $t$ as 
\begin{equation}\label{compound-dpp}
P[t]:=\expect{\sum_{n=1}^NVy^n[t]+\frac12\left(\|\mathbf{Q}[t+1]\|^2-\|\mathbf{Q}[t]\|^2\right)}.
\end{equation}
By the queue updating rule \eqref{queue-update}, we have
\begin{align*}
P[t]\leq&\expect{\sum_{n=1}^NVy^n[t]+\frac12\sum_{l=1}^L\left(\sum_{n=1}^Nz^n_l[t]-d_l[t]\right)^2
+\sum_{l=1}^LQ_l[t]\left(\sum_{n=1}^Nz^n_l[t]-d_l[t]\right)}\\
\leq&\frac12(Nz_{\max}+d_{\max})^2L+\expect{\sum_{n=1}^NVy^n[t]
+\sum_{l=1}^LQ_l[t]\left(\sum_{n=1}^Nz^n_l[t]-d_l[t]\right)}\\
=&\frac12(Nz_{\max}+d_{\max})^2L+\expect{\sum_{n=1}^NVy^n[t]
+\sum_{l=1}^LQ_l[t]\left(\sum_{n=1}^Nz^n_l[t]-d_l\right)}
\end{align*}
where the second inequality follows from the boundedness assumption (Assumption \ref{bounded-assumption}) that $\sum_{l=1}^L\left(\sum_{n=1}^Nz^n_l[t]-d_l[t]\right)^2\leq (Nz_{\max}+d_{\max})^2L$, and the equality follows from the fact that $d_l[t]$ is i.i.d. and independent of $Q_l[t]$, thus,
\[\expect{Q_l[t]d_l[t]}=\expect{Q_l[t]\cdot\expect{d_l[t]|Q_l[t]}}=\expect{Q_l[t]d_l}.\]
For simplicity, define $C_3=\frac12(Nz_{\max}+d_{\max})^2L$. Now, by the achievability of optimality in $\otimes_{n=1}^N\mathcal{P}^n$ (Lemma \ref{stationary-lemma}), we have $\sum_{n=1}^N\overline{g}^n_{l,*}\leq d_l$, thus, substituting this inequality into the above bound for $P[t]$ gives
\begin{align*}
P[t]\leq& C_3+\expect{\sum_{n=1}^NVy^n[t]
+\sum_{n=1}^N\sum_{l=1}^LQ_l[t]\left(z^n_l[t]-\overline{g}^n_{l,*}\right)}\\
=&C_3+\sum_{n=1}^N\expect{Vy^n[t]+\dotp{\mathbf{Q}[t]}{\mathbf{z}^n[t]-\overline{\mathbf{g}}^n_*}}\\
=&C_3+\sum_{n=1}^N\expect{X^n[t]}+V\sum_{n=1}^N\overline{f}^n_*\\
=&C_3+\sum_{n=1}^N\expect{X^n[t]}+Vf_*,
\end{align*}
where we use the definition of $X^n[t]$ in \eqref{def-X} by substituting $(\overline{f}^n,\overline{\mathbf{g}}^n)$ with $(\overline{f}^n_*,\overline{\mathbf{g}}^n_*)$, i.e.
 $X^n[t]=V(y^n[t]-\overline{f}^n_*)+\dotp{\mathbf{Q}[t]}{\mathbf{z}^n[t]-\overline{\mathbf{g}}^n_*}$, in the second from last equality and use the optimality condition (Lemma \ref{stationary-lemma}) in the final equality. Now, by Lemma \ref{sync-lemma}, we have for any $T\in\mathbb{N}$,
 \begin{align}
\frac1T\sum_{t=0}^{T-1}P[t]\leq&
C_3+Vf_*+\frac1T\sum_{t=0}^{T-1}\sum_{n=1}^N\expect{X^n[t]}\nonumber\\
=&C_3+Vf_*+\sum_{n=1}^N\frac1T\sum_{t=0}^{T-1}\expect{X^n[t]}\nonumber\\
\leq&NC_1+C_3+Vf_*+\frac{NC_2V}{T}.\label{inter-dpp}
\end{align}

On the other hand, by the definition of $P[t]$ in \eqref{compound-dpp} and then telescoping sums with $\mathbf{Q}[0]=0$, we have
\begin{align*}
\frac1T\sum_{t=0}^{T-1}P[t]
=&\frac1T\sum_{t=0}^{T-1}\expect{\sum_{n=1}^NVy^n[t]+\frac12\left(\|\mathbf{Q}[t+1]\|^2-\|\mathbf{Q}[t]\|^2\right)}\\
=&\frac1T\sum_{t=0}^{T-1}\sum_{n=1}^NV\expect{y^n[t]}
+\frac{1}{2T}\expect{\|\mathbf{Q}[T]\|^2}.
\end{align*}
Combining this with inequality \eqref{inter-dpp} gives
\begin{equation}\label{final-dpp}
\frac1T\sum_{t=0}^{T-1}\sum_{n=1}^NV\expect{y^n[t]}
+\frac{1}{2T}\expect{\|\mathbf{Q}[T]\|^2}
\leq NC_1+C_3+Vf_*+\frac{NC_2V}{T}.
\end{equation}
Since $\frac{1}{2T}\expect{\|\mathbf{Q}[T]\|_2^2}\geq0$, we can throw away the term and the inequality still holds, i.e. 
\begin{equation}\label{final-dpp-2}
\frac1T\sum_{t=0}^{T-1}\sum_{n=1}^N\expect{y^n[t]}
\leq f_*+\frac{NC_1+C_3}{V}+\frac{NC_2}{T}.
\end{equation}
Taking $\limsup_{T\rightarrow\infty}$ from both sides gives the near optimality in the theorem.

To get the constraint violation bound, we use Assumption \ref{bounded-assumption} that $|y^n[t]|\leq y_{\max}$, then, by \eqref{final-dpp} again, we have
\begin{align*}
\frac{1}{T}\expect{\|\mathbf{Q}[T]\|^2}
\leq 2(NC_1+C_3)+4Vy_{\max}+\frac{2NC_2V}{T}.
\end{align*}
By Jensen's inequality $\expect{\|\mathbf{Q}[T]\|^2}\geq\expect{\|\mathbf{Q}[T]\|}^2$.
This implies that 
\[
\expect{\|\mathbf{Q}[T]\|}\leq\sqrt{(2(NC_1+C_3)+4Vy_{\max})T+2NC_2V},
\]
which implies
\begin{equation}\label{key-bound}
\frac{1}{T}\expect{\|\mathbf{Q}[T]\|}\leq\sqrt{\frac{2(NC_1+C_3)+4Vy_{\max}}{T}+\frac{2NC_2V}{T^2}}.
\end{equation}
Sending $T\rightarrow\infty$ gives
\[
\lim_{T\rightarrow\infty}\frac{1}{T}{\expect{Q_l[T]}}=0,~~\forall l\in\{1,2,\cdots,L\}.
\]
Finally, by Lemma \ref{lemma:queue-bound}, all constraints are satisfied.
\end{proof}

Note that the above proof implies a more refined result that illustrates the convergence time. Fix an 
$\varepsilon>0$, let $V=1/\varepsilon$, then, for all $T\geq 1/\varepsilon$, \eqref{final-dpp-2} implies that 
\[\frac1T\sum_{t=0}^{T-1}\sum_{n=1}^N\expect{y^n[t]}
\leq f_*+\mathcal{O}(\varepsilon).\]
However, \eqref{key-bound} suggests a larger convergence time is required for constraint satisfaction!
For $V=1/\varepsilon$, it can be shown that \eqref{key-bound} implies that 
\[\frac{1}{T}\sum_{t=0}^{T-1}\sum_{n=1}^N\expect{z^n_l[t]}\leq d_l + \mathcal{O}(\varepsilon),\]
whenever $T\geq 1/\varepsilon^3$. The next section shows a tighter $1/\varepsilon^2$ convergence time with a mild Lagrange multiplier assumption.

\section{Convergence Time Analysis}\label{sec-convergence-time}
\subsection{Lagrange Multipliers}
Consider the following optimization problem:
\begin{align}
\min&~~\sum_{n=1}^N\overline{f}^n\label{modi-prob-1}\\
s.t.&~~\sum_{n=1}^N\overline{g}^n_l\leq d_l,~\forall l\in\{1,2,\cdots, L\}, \\
&~~(\overline{f}^n,\overline{\mathbf{g}}^n)\in\mathcal{P}^n,~\forall n \in\{1,2,\cdots,N\}.
\label{modi-prob-3}
\end{align}
Since $\mathcal{P}^n$ is convex, it follows $\mathcal{P}^n$ is convex and $\otimes_{n=1}^N\mathcal{P}^n$ is also convex. Thus, \eqref{modi-prob-1}-\eqref{modi-prob-3} is a convex program. Furthermore, by Lemma \ref{stationary-lemma}, we have \eqref{modi-prob-1}-\eqref{modi-prob-3} is feasible if and only if \eqref{prob-1}-\eqref{prob-2} is feasible, and when assuming feasibility, they have the same optimality $f_*$ as is specified in Lemma \ref{stationary-lemma}.

Since $\mathcal{P}^n$ is convex, one can show (see Proposition 5.1.1 of \cite{Be09}) that there \textit{always} exists a sequence $(\gamma_0, \gamma_1,\cdots,\gamma_L)$ so that $\gamma_i\geq0,~i=0,1,\cdots,L$ and
\[
\sum_{n=1}^N\gamma_0\overline{f}^n+\sum_{l=1}^L\gamma_l\sum_{n=1}^N\overline{g}^n_l
\geq \gamma_0f_*+\sum_{l=1}^L\gamma_ld_l,
~\forall (\overline{f}^n,\overline{\mathbf{g}}^n)\in\mathcal{P}^n,
\]
i.e. there always exists  a hyperplane parametrized by  $(\gamma_0, \gamma_1,\cdots,\gamma_L)$, supported at $(f_*,d_1,\cdots,d_L)$ and containing the set $\left\{\left(\sum_{n=1}^N\overline{f}^n,~\sum_{n=1}^N\overline{\mathbf{g}}^n\right):~(\overline{f}^n,\overline{\mathbf{g}}^n)\in\mathcal{P}^n,~\forall n\in\{1,2,\cdots,N\}\right\}$ on one side. This hyperplane is called ``separating hyperplane''.
The following assumption stems from this property and simply assumes this separating hyperplane to be non-vertical (i.e. $\gamma_0>0$):
\begin{assumption}\label{sep-hype}
There exists non-negative finite constants $\gamma_1,~\gamma_2,~\cdots,~\gamma_L$ such that the following holds,
\begin{align*}
\sum_{n=1}^N\overline{f}^n+\sum_{l=1}^L\gamma_l\sum_{n=1}^N\overline{g}^n_l
\geq f_*+\sum_{l=1}^L\gamma_ld_l,
~\forall (\overline{f}^n,\overline{\mathbf{g}}^n)\in\mathcal{P}^n,
\end{align*}
i.e. there exists a separating hyperplane parametrized by $(1,\gamma_1,\cdots,\gamma_L)$.
\end{assumption} 

\begin{remark}
The parameters $\gamma_1,~\cdots,~\gamma_L$ are called Lagrange multipliers and this assumption is equivalent to the existence of Lagrange multipliers for 
constrained convex program \eqref{modi-prob-1}-\eqref{modi-prob-3}. 
It is known that Lagrange multipliers exist if the Slater's condition holds (\cite{Be09}), which states that there exists a nonempty interior of the feasible region for the convex program. Slater's condition is very common in convex optimization theory and plays an important role in convergence rate analysis, such as the analysis of the interior point algorithm (\cite{BV04}). In the current context, this condition is satisfied, for example, in energy aware server scheduling problems, if the highest possible sum of service rates from all servers is strictly higher than the arrival rate.
\end{remark}

\begin{lemma}\label{bound-lemma-2}
Suppose $\{y^n[t]\}_{t=0}^\infty$, $\{\mathbf{z}^n[t]\}_{t=0}^\infty$ and $\{T^n_k\}_{k=0}^\infty$ are processes resulting from the proposed algorithm.
Under the Assumption \ref{sep-hype}, 
\begin{align*}
\frac1T\sum_{t=0}^{T-1}\left(f_*-\sum_{n=1}^N\expect{y^n[t]}\right)
\leq\frac1T\sum_{t=0}^{T-1}\sum_{l=1}^L\gamma_l\left(\sum_{n=1}^N\expect{z^n_l[t]}-d_l\right)
+\frac{C_4}{T},
\end{align*}
where $C_4=B_1N+B_2N\sum_{l=1}^L\gamma_l$, and $B_1$, $B_2$ are defined in Lemma \ref{bound-lemma-1}.
\end{lemma}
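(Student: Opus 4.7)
The plan is to apply Assumption \ref{sep-hype} pointwise along the sample path to the frame-constant surrogate process $(f^n[t],\mathbf{g}^n[t])$ introduced in Lemma \ref{bound-lemma-1}, and then translate from the surrogates back to the original $y^n[t]$ and $z^n_l[t]$ via the telescoping bounds \eqref{bound-1}-\eqref{bound-2}.

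First, I would verify that $(f^n[t],\mathbf{g}^n[t])\in\mathcal{P}^n$ almost surely for every $n$ and $t$. By the definitions in Lemma \ref{bound-lemma-1}, if $t\in\mathcal{T}^n_k$ and $\alpha^n_k=\alpha^n$, then $(f^n[t],\mathbf{g}^n[t])=(\widehat{f}^n(\alpha^n),\widehat{\mathbf{g}}^n(\alpha^n))$, which lies in the set $\mathcal{Q}^n$ appearing in the proof of Lemma \ref{convex-lemma}, and $\mathcal{Q}^n\subseteq\mathcal{P}^n$. Hence the separating hyperplane inequality of Assumption \ref{sep-hype} applies sample-path-wise at each slot with $\overline{f}^n=f^n[t]$, $\overline{\mathbf{g}}^n=\mathbf{g}^n[t]$, yielding
\[
\sum_{n=1}^N f^n[t]+\sum_{l=1}^L\gamma_l\sum_{n=1}^N g^n_l[t]\;\geq\;f_*+\sum_{l=1}^L\gamma_l d_l.
\]

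Next, I would take expectations (the inequality is linear in the quantities), average over $t=0,\ldots,T-1$, and rearrange to isolate $f_*$ on one side:
\[
f_*-\frac{1}{T}\sum_{t=0}^{T-1}\sum_{n=1}^N\expect{f^n[t]}\;\leq\;\frac{1}{T}\sum_{t=0}^{T-1}\sum_{l=1}^L\gamma_l\left(\sum_{n=1}^N\expect{g^n_l[t]}-d_l\right).
\]
Then I would substitute the surrogate averages in terms of the actual penalty/metric averages using Lemma \ref{bound-lemma-1}: inequality \eqref{bound-1} gives $\frac{1}{T}\sum_t\expect{f^n[t]}\leq\frac{1}{T}\sum_t\expect{y^n[t]}+B_1/T$, and inequality \eqref{bound-2} gives $\frac{1}{T}\sum_t\expect{g^n_l[t]}\leq\frac{1}{T}\sum_t\expect{z^n_l[t]}+B_2/T$. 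Summing the first bound over $n$ and the second over $n$ weighted by $\gamma_l$ produces the error terms $NB_1/T$ and $NB_2\sum_l\gamma_l/T$, whose sum is exactly $C_4/T$.

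This is really just bookkeeping once the key observation (surrogate points land in $\mathcal{P}^n$) is made; there is no serious obstacle. The one thing to be careful about is the direction of the inequalities from Lemma \ref{bound-lemma-1}: we need an upper bound on $\sum_t\expect{f^n[t]}$ in terms of $\sum_t\expect{y^n[t]}$ (which is precisely what \eqref{bound-1} provides since it bounds $f^n[t]-y^n[t]$ above), and analogously for $g^n_l[t]$ versus $z^n_l[t]$. Since the separating hyperplane inequality is used to upper-bound $f_*$, both substitutions are in the consistent direction, and the stated bound with constant $C_4=B_1N+B_2N\sum_{l=1}^L\gamma_l$ follows.
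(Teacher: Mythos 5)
Your proposal is correct and follows essentially the same route as the paper: identify the frame-constant surrogates from Lemma \ref{bound-lemma-1} as lying in $\mathcal{P}^n$, apply the separating hyperplane of Assumption \ref{sep-hype}, and then convert the surrogate time averages to the actual ones via \eqref{bound-1}--\eqref{bound-2}, producing the constant $C_4=B_1N+B_2N\sum_{l=1}^L\gamma_l$. The only cosmetic difference is that you apply the hyperplane inequality pathwise and then take expectations, whereas the paper first uses convexity of $\mathcal{P}^n$ to place $\bigl(\expect{f^n[t]},\expect{\mathbf{g}^n[t]}\bigr)$ in $\mathcal{P}^n$ and applies the inequality there; since the inequality is linear, the two orderings are equivalent.
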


\begin{proof}
First of all, from the statement of Lemma \ref{bound-lemma-1}, for the proposed algorithm, we can define the corresponding processes $(f^n[t],\mathbf{g}^n[t])$ for all $n$ as
\begin{align*}
f^n[t] =& \widehat{f}^n(\alpha^n) = \widehat{y}^n(\alpha^n)/\widehat{T}^n(\alpha^n),
~~\textrm{if}~t\in\mathcal{T}^n_k,\alpha^n_k=\alpha^n\\
\mathbf{g}^n[t] =& \widehat{\mathbf{g}}^n(\alpha^n)= \widehat{\mathbf z}^n(\alpha^n)/\widehat{T}^n(\alpha^n),~~\textrm{if}~t\in\mathcal{T}^n_k,\alpha^n_k=\alpha^n,
\end{align*}
where the last equality follows from the definition of $\widehat{f}^n(\alpha^n)$ and $\widehat{\mathbf{g}}^n(\alpha^n)$ in Definition \ref{PV-def}.
Since
$\left(\widehat{y}^n(\alpha^n),~\widehat{\mathbf z}^n(\alpha^n),~\widehat{T}^n(\alpha^n)\right)
\in\mathcal{S}^n$, by definition of $\mathcal{P}^n$ in Definition \ref{PR-def},
$(f^n[t],\mathbf{g}^n[t])\in\mathcal P^n\subseteq\mathcal{P}^n,~\forall n,~\forall t$.
Since $\mathcal{P}^n$ is a convex set by Lemma \ref{convex-lemma}, it follows
\begin{align*}
\left(\expect{f^n[t]},~\expect{\mathbf{g}^n[t]}\right)\in\mathcal{P}^n,~~\forall t,~\forall n.
\end{align*}
By Assumption \ref{sep-hype}, we have
\begin{align*}
\sum_{n=1}^N\expect{f^n[t]}+\sum_{l=1}^L\gamma_l\sum_{n=1}^N\expect{g^n_l[t]}
\geq f_*+\sum_{l=1}^L\gamma_ld_l,~~\forall t.
\end{align*}
Rearranging terms gives
\begin{align*}
f_*-\sum_{n=1}^N\expect{f^n[t]}\leq \sum_{l=1}^L\gamma_l\left(\sum_{n=1}^N\expect{g^n_l[t]}-d_l\right),~~\forall t.
\end{align*}
Taking the time average from 0 to $T-1$ gives
\begin{align}
\frac1T\sum_{t=0}^{T-1}\left(f_*-\sum_{n=1}^N\expect{f^n[t]}\right)
 \leq \frac1T\sum_{t=0}^{T-1}\sum_{l=1}^L\gamma_l\left(\sum_{n=1}^N\expect{g^n_l[t]}-d_l\right). \label{inter-ave-bound-1}
\end{align}
For the left hand side of \eqref{inter-ave-bound-1}, we have
\begin{align}
l.h.s.&=\frac1T\sum_{t=0}^{T-1}\left(f_*-\sum_{n=1}^N\expect{y^n[t]}\right)
+\frac1T\sum_{t=0}^{T-1}\sum_{n=1}^N\expect{y^n[t]-f^n[t]}\nonumber\\
&\geq\frac1T\sum_{t=0}^{T-1}\left(f_*-\sum_{n=1}^N\expect{y^n[t]}\right)-\frac{B_1N}{T}.\label{inter-ave-bound-2}
\end{align}
where the inequality follows from \eqref{bound-1} in Lemma \ref{bound-lemma-1}.
For the right hand side of \eqref{inter-ave-bound-1}, we have
\begin{align}
r.h.s.&= \frac1T\sum_{t=0}^{T-1}\sum_{l=1}^L\gamma_l\left(\sum_{n=1}^N\expect{z^n_l[t]}-d_l\right)
+\frac1T\sum_{t=0}^{T-1}\sum_{l=1}^L\gamma_l\sum_{n=1}^N\expect{g^n_l[t]-z^n_l[t]}\nonumber\\
&\leq\frac1T\sum_{t=0}^{T-1}\sum_{l=1}^L\gamma_l\left(\sum_{n=1}^N\expect{z^n_l[t]}-d_l\right)+\frac{B_2N\sum_{l=1}^L\gamma_l}{T}, \label{inter-ave-bound-3}
\end{align}
where the inequality follows from the fact that $\gamma_l\geq0,~\forall l$ and \eqref{bound-2} in Lemma \ref{bound-lemma-1}. Substituting \eqref{inter-ave-bound-2} and \eqref{inter-ave-bound-3} into \eqref{inter-ave-bound-1} finishes the proof.
\end{proof}

\subsection{Convergence time theorem}
\begin{theorem}
Fix $\varepsilon\in(0,1)$ and define $V=1/\varepsilon$. If the problem \eqref{prob-1}-\eqref{prob-2} is feasible and the Assumption \ref{sep-hype} holds, then, for all $T\geq1/\varepsilon^2$, 
\begin{align}
&\frac1T\sum_{t=0}^{T-1}\sum_{n=1}^N\expect{y^n[t]}\leq f_*+\mathcal{O}(\varepsilon),\label{ctime-1}\\
&\frac1T\sum_{t=0}^{T-1}\sum_{n=1}^N\expect{z^n_l[t]}\leq d_l+\mathcal{O}(\varepsilon),
l\in\{1,2,\cdots,L\}.\label{ctime-2}
\end{align}
Thus, the algorithm provides $\mathcal{O}(\varepsilon)$ approximation with the convergence time $\mathcal{O}(1/\varepsilon^2)$.
\end{theorem}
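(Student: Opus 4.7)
The plan is to leverage three ingredients already established: (a) the drift-plus-penalty inequality \eqref{final-dpp}, (b) the Lagrange-multiplier estimate from Lemma \ref{bound-lemma-2}, and (c) the elementary queue lower bound that follows from the update rule \eqref{queue-update}. The objective bound \eqref{ctime-1} will essentially be free; the real work is to upgrade the crude $\mathcal{O}(\sqrt{V/T})$ queue bound used at the end of Section \ref{section:limiting} to a sharper $\mathcal{O}(\varepsilon)$ bound good enough to cover constraint violation at time horizon $T=1/\varepsilon^2$.

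First I would plug $V=1/\varepsilon$ and $T\geq 1/\varepsilon^2$ directly into \eqref{final-dpp-2}; the right-hand side reads $f_*+(NC_1+C_3)\varepsilon+NC_2\varepsilon^2$, which establishes \eqref{ctime-1} immediately.  For the constraint bound \eqref{ctime-2}, I would begin by telescoping the queue update \eqref{queue-update}, using $Q_l[t+1]\geq Q_l[t]+\sum_n z^n_l[t]-d_l[t]$ together with $Q_l[0]=0$ and $\expect{d_l[t]}=d_l$, to obtain
\begin{align*}
\frac{1}{T}\sum_{t=0}^{T-1}\sum_{n=1}^N\expect{z^n_l[t]}-d_l \;\leq\; \frac{\expect{Q_l[T]}}{T}.
\end{align*}
Thus it suffices to bound $\expect{Q_l[T]}\leq \sqrt{\expect{\|\mathbf{Q}[T]\|^2}}$ by an $\mathcal{O}(\varepsilon T)$ quantity.

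The key step is to couple \eqref{final-dpp} with Lemma \ref{bound-lemma-2}.  Rearranging \eqref{final-dpp} gives
\begin{align*}
\tfrac{1}{2T}\expect{\|\mathbf{Q}[T]\|^2}\;\leq\;NC_1+C_3+\tfrac{NC_2V}{T}+V\Bigl(f_*-\tfrac{1}{T}\sum_{t=0}^{T-1}\sum_{n=1}^N\expect{y^n[t]}\Bigr).
\end{align*}
Apply Lemma \ref{bound-lemma-2} to the last bracket, then invoke the queue lower bound above and Cauchy--Schwarz, $\sum_l\gamma_l\expect{Q_l[T]}\leq \|\boldsymbol\gamma\|\sqrt{\expect{\|\mathbf{Q}[T]\|^2}}$.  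Setting $\Phi:=\sqrt{\expect{\|\mathbf{Q}[T]\|^2}}$, this produces a quadratic inequality of the form
\begin{align*}
\Phi^2\;\leq\;2T(NC_1+C_3)+2V(NC_2+C_4)+2V\|\boldsymbol\gamma\|\Phi.
\end{align*}

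Solving the quadratic yields $\Phi\leq 2V\|\boldsymbol\gamma\|+\sqrt{2T(NC_1+C_3)+2V(NC_2+C_4)}$; dividing by $T$ and substituting $V=1/\varepsilon$ with $T\geq 1/\varepsilon^2$ makes each of the three resulting terms $\mathcal{O}(\varepsilon)$, $\mathcal{O}(\varepsilon)$, and $\mathcal{O}(\varepsilon^{3/2})$ respectively, which establishes \eqref{ctime-2}.  The main obstacle I expect is the self-referential appearance of $\expect{\|\mathbf{Q}[T]\|^2}$: the DPP bound involves the objective slack and Lemma \ref{bound-lemma-2} bounds the slack by the constraint violation, which via the queue inequality feeds back into $\|\mathbf{Q}[T]\|$.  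The quadratic trick is precisely what breaks this circularity, and it is the place where Assumption \ref{sep-hype} (the existence of Lagrange multipliers) is essential---without it one cannot convert an objective slack into queue drift and is stuck with the weaker $1/\varepsilon^3$ bound noted at the end of Section \ref{section:limiting}.
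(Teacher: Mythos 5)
Your proposal is correct and follows essentially the same route as the paper: bound \eqref{ctime-1} directly from \eqref{final-dpp-2}, telescope the queue update to reduce \eqref{ctime-2} to bounding $\expect{Q_l[T]}/T$, couple \eqref{final-dpp} with Lemma \ref{bound-lemma-2}, invoke Cauchy--Schwarz and Jensen, and close the circular dependence via a quadratic inequality in (a proxy for) the queue norm. The only cosmetic difference is that you solve the quadratic in $\Phi=\sqrt{\expect{\|\mathbf{Q}[T]\|^2}}$ whereas the paper solves it in $\|\expect{\mathbf{Q}[T]}\|$; both are interchangeable here and lead to the same $\mathcal{O}(\varepsilon)+\mathcal{O}(\varepsilon^{3/2})$ tail.
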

\begin{proof}
First of all, by queue updating rule \eqref{queue-update}, 
\begin{equation}\label{queue-bound}
\sum_{t=0}^{T-1}\left(\sum_{n=1}^N\expect{z^n_l[t]}-d_l\right)\leq\expect{Q_l[T]}.
\end{equation}
By Lemma \ref{bound-lemma-2}, we have
\begin{align}
\frac1T\sum_{t=0}^{T-1}\left(f_*-\sum_{n=1}^N\expect{y^n[t]}\right)
\leq&\frac1T\sum_{t=0}^{T-1}\sum_{l=1}^L\gamma_l\left(\sum_{n=1}^N\expect{z^n_l[t]}-d_l\right)
+\frac{C_4}{T},\nonumber\\
\leq&\sum_{l=1}^L\frac{\gamma_l}{T}\expect{Q_l[T]}+\frac{C_4}{T}.\label{inter-ctime-1}
\end{align}
Combining this with \eqref{final-dpp} gives
\begin{align}
\frac{1}{2T}\expect{\|\mathbf{Q}[T]\|^2}
&\leq NC_1+C_3+\frac{V}{T}\sum_{t=0}^{T-1}\left(f_*-\sum_{n=1}^N\expect{y^n[t]}\right)+\frac{NC_2V}{T}\nonumber\\
&\leq NC_1+C_3 +\frac{(NC_2+C_4)V}{T}+V\sum_{l=1}^L\frac{\gamma_l}{T}\expect{Q_l[T]}
\nonumber\\
&\leq NC_1+C_3 +\frac{(NC_2+C_4)V}{T}+\frac{V}{T}\|\gamma\|\cdot\|\expect{\mathbf{Q}[T]}\|,
\label{inter-ctime-2}
\end{align}
where $\gamma:=(\gamma_1,~\cdots,~\gamma_L)$,
the second inequality follows from \eqref{inter-ctime-1} and the final inequality follows from Cauchy-Schwarz. Then, by Jensen's inequality, we have
\[\|\expect{\mathbf{Q}[T]}\|^2\leq\expect{\|\mathbf{Q}[T]\|^2}.\]
Thus, it follows by \eqref{inter-ctime-2} that
\begin{align*}
\|\expect{\mathbf{Q}[T]}\|^2- 2V\|\gamma\|\cdot\|\expect{\mathbf{Q}[T]}\| - 2(NC_1+C_3)T
-2(NC_2+C_4)V\leq 0.
\end{align*}
The left hand side is a quadratic form on $\|\expect{\mathbf{Q}[T]}\|$, and the inequality implies that 
$\|\expect{\mathbf{Q}[T]}\|$ is deterministically upper bounded by the largest root of the equation 
$x^2-bx-c=0$ with $b=2V\|\gamma\|$ and $c=2(NC_1+C_3)T+2(NC_2+C_4)V$. Thus,
\begin{align*}
\|\expect{\mathbf{Q}[T]}\|\leq&\frac{b+\sqrt{b^2+4c}}{2}\\
=& V\|\gamma\|+\sqrt{V^2\|\gamma\|^2+2(NC_1+C_3)T+2(NC_2+C_4)V}\\
\leq& 2V\|\gamma\|+ \sqrt{2(NC_1+C_3)T} + \sqrt{2(NC_2+C_4)V}.
\end{align*}
Thus, for any $l\in\{1,2,\cdots,L\}$,
\begin{align*}
\frac1T\expect{Q_l[T]}\leq \frac{2V\|\gamma\|}{T} 
 + \sqrt{\frac{2(NC_1+C_3)}{T}} + \frac{\sqrt{2(NC_2+C_4)V}}{T}.
\end{align*}
By \eqref{queue-bound} again,
\begin{align*}
\frac1T\sum_{t=0}^{T-1}\sum_{n=1}^N\expect{z^n_l[t]}\leq d_l+\frac{2V\|\gamma\|}{T} 
 + \sqrt{\frac{2(NC_1+C_3)}{T}} + \frac{\sqrt{2(NC_2+C_4)V}}{T}.
\end{align*}
Substituting $V=1/\varepsilon$ and $T\geq1/\varepsilon^2$ into the above inequality gives 
$\forall l\in\{1,2,\cdots,L\}$,
\begin{align*}
\frac1T\sum_{t=0}^{T-1}\sum_{n=1}^N\expect{z^n_l[t]}\leq&
d_l + \left(2\|\gamma\|+\sqrt{2(NC_1+C_3)}\right)\varepsilon + \sqrt{2(NC_2+C4)}\varepsilon^{3/2}\\
=&d_l+\mathcal{O}(\varepsilon).
\end{align*}
Finally, substituting $V=1/\varepsilon$ and $T\geq1/\varepsilon^2$ into \eqref{final-dpp-2} gives
\[\frac1T\sum_{t=0}^{T-1}\sum_{n=1}^N\expect{y^n[t]}\leq f_*+\mathcal{O}(\varepsilon),\]
finishing the proof.
\end{proof}

\section{Simulation Study in Energy-aware Scheduling}\label{section-application}
 Here, we apply the algorithm introduced in Section \ref{section:algorithm} to deal with the energy-aware scheduling problem described in Section \ref{sec:application}. To be specific, we consider a scenario with 5 homogeneous servers and 3 different classes of jobs, i.e. $N=5$ and $L=3$.
We assume that each server can only choose one class of jobs to serve during each frame. So the mode set $\mathcal{M}^n$ contains three actions $\{1,2,3\}$ and the action $i$ stands for serving the $i$-th class of jobs and we count the number of serviced jobs at the end of each service duration. The action $m^n_k$ determines the following quantities:
 \begin{itemize}
\item The uniformly distributed total number of class $l$ jobs that can be served with expectation
$\expect{\left.\sum_{t\in\mathcal{T}^n_k}\mu^n_l[t]\right|~m^n_k}:=\widehat{\mu}^n_l(m^n_k)$.
\item The geometrically distributed service duration $H^n_k$ slots with expectation $\expect{\left.H^n_k\right|~m^n_k}:=\widehat{H}^n(m^n_k)$.
\item The energy consumption $\widehat{e}^n(m^n_k)$ for serving all these jobs.
\item The geometrically distributed idle/setup time $I^n_k$ slots with constant energy consumption $p^n$ per slot and zero job service. The expectation $\expect{\left.I^n_k\right|~m^n_k}:=\widehat{I}^n(m^n_k)$.
\end{itemize}
The idle/setup cost is $p^n=3$ units per slot and the rest of the parameters are listed in Table 1.

Following the algorithm description in Section \ref{section:algorithm}, the proposed algorithm has the queue updating rule 
\[Q_l[t+1]=\max\left\{Q_l[t]+\lambda_l[t]-\sum_{n=1}^N\mu^n_l[t],~0\right\},\]
and each system minimizes
\eqref{DPP-ratio} each frame, which can be written as 
\begin{align*}
\min_{m^n_k\in\mathcal{M}^n}\frac{V\left(\widehat{e}^n_l(m^n_k)+p^n\widehat{I}^n(m^n_k)\right)
-\dotp{\mathbf{Q}[t^n_k]}{\widehat{\mu}^n(m^n_k)}}{\widehat{H}^n(m^n_k)+\widehat{I}^n(m^n_k)}.
\end{align*}

\begin{table}
\begin{center}
\caption{Problem parameters}
\begin{tabular}{c|c|c|c|c|c}
  \hline
   & $\lambda_i$ & $\widehat{H}^n(i)$ & $\widehat{\mu}^n(i)$ & $\widehat{e}^n(i)$ & $\widehat{I}^n(i)$\\
   \hline
Class 1   & 2 & 5.5 & 15 (Uniform $[9,21]\cap\mathbb{N}$) & 16 & 2.5 \\
  \hline
 Class 2  & 3 & 4.6 & 21 (Uniform $[15,27]\cap\mathbb{N}$) & 20 & 4.3\\
 \hline
 Class 3  & 4 & 3.8 & 17 (Uniform $[11,23]\cap\mathbb{N}$) & 13 & 3.7\\
 \hline
\end{tabular}
\end{center}
\end{table}

Each plot for the proposed algorithm is the result of running 1 million slots and taking the time average as the performance of the proposed algorithm. The benchmark is the optimal stationary performance obtained by performing a change of variable and solving a linear program, knowing the arrival rates (see also \cite{Neely12} for details). 

Fig. \ref{fig:Stupendous2} shows as the trade-off parameter $V$ gets larger, the time average energy consumptions under the proposed algorithm approaches the optimal energy consumption. Fig. \ref{fig:Stupendous3} shows as $V$ gets large, the time average number of services also approaches the optimal service rate for each class of jobs. In Fig. \ref{fig:Stupendous4}, we plot the time average queue backlog for each class of jobs verses $V$ parameter. We see that the queue backlog for the first class is always low whereas the rest queue backlogs scale up linearly with $V$. This is because the service rate for the first class is always strictly larger than the arrival rate whereas for the rest classes, as $V$ gets larger, the service rates approach the arrival rates. This plot, together with Fig. \ref{fig:Stupendous2}, also demonstrate that $V$ 
is indeed a trade-off parameter which trades queue backlog for near optimality. 
\begin{figure}[htbp]
   \centering
   \includegraphics[height=3in]{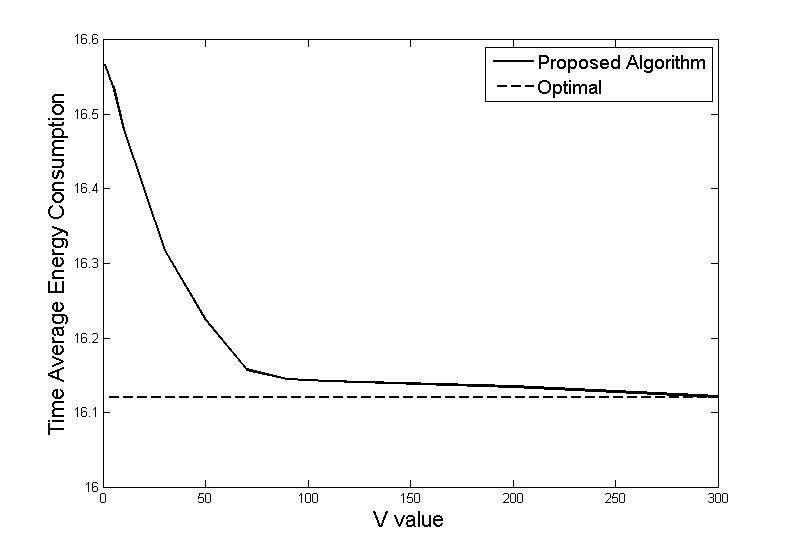} 
   \caption{Time average energy consumption verses $V$ parameter over 1 millon slots.}
   \label{fig:Stupendous2}
\end{figure}

\begin{figure}[htbp]
   \centering
   \includegraphics[height=3in]{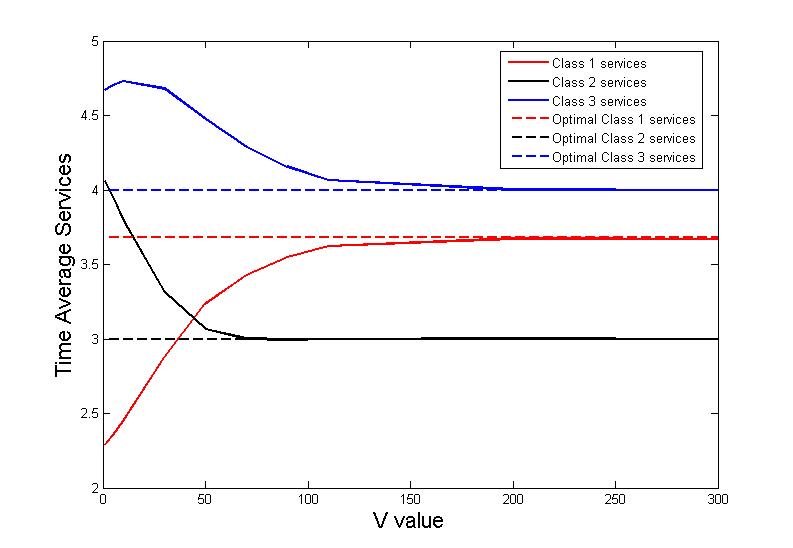} 
   \caption{Time average services verses $V$ parameter over 1 millon slots.}
   \label{fig:Stupendous3}
\end{figure}

\begin{figure}[htbp]
   \centering
   \includegraphics[height=3in]{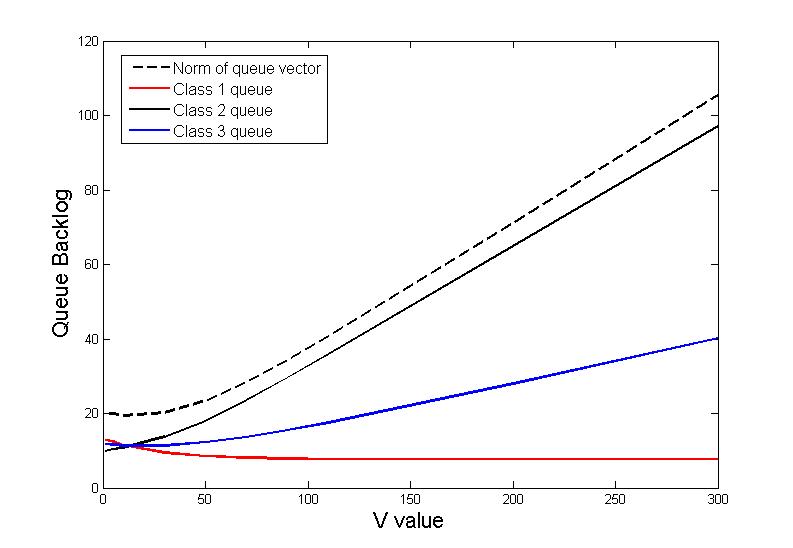} 
   \caption{Time average queue size verses $V$ parameter over 1 million slots.}
   \label{fig:Stupendous4}
\end{figure}

\appendix

\section{Additional lemmas and proofs.}\label{appendix-proof}
\begin{proof}[Proof of Lemma \ref{lemma:queue-bound}]
Fix $l\in\{1,2,\cdots,L\}$. For any fixed $T$, $Q_l[T]=\sum_{t=0}^{T-1}(Q_l[t+1]-Q_l[t])$. For each summand, by queue updating rule \eqref{queue-update},
\begin{align*}
Q_l[t+1]-Q_l[t]=&\max\left\{Q_l[t]+\sum_{n=1}^Nz^n_l[t]-d_l[t],~0\right\}-Q_l[t]\\
\geq&Q_l[t]+\sum_{n=1}^Nz^n_l[t]-d_l[t]-Q_l[t]=\sum_{n=1}^Nz^n_l[t]-d_l[t].
\end{align*}
Thus, by the assumption $Q_l[0]=0$,
$$Q_l[T]\geq\sum_{t=0}^{T-1}\left(\sum_{n=1}^Nz^n_l[t]-d_l[t]\right).$$
Taking expectations of both sides with $\expect{d_l[t]}=d_l,~\forall l$, gives
$$\expect{Q_l[T]}\geq\sum_{t=0}^{T-1}\left(\sum_{n=1}^N\expect{z^n_l[t]}-d_l\right).$$
Dividing both sides by $T$ and passing to the limit gives
\[\limsup_{T\rightarrow\infty}\frac1T\sum_{t=0}^{T-1}\left(\sum_{n=1}^N\expect{z^n_l[t]}-d_l\right)
\leq\lim_{T\rightarrow\infty}\frac{1}{T}{\expect{Q_l[T]}}=0,\]
finishing the proof.
\end{proof}

\begin{proof}[Proof of Lemma \ref{bound-lemma-1}]
We prove bound \eqref{bound-1} (\eqref{bound-2} is proved similarly). By definition of $\widehat{f}^n(\alpha^n)$ in Definition \ref{PV-def}, we have for any $\alpha^n\in\mathcal{A}^n$,
\[\widehat{f}^n(\alpha^n)=\frac{\expect{\left.\sum_{t\in\mathcal{T}^n_k}y^n[t]\right|~\alpha^n_k=\alpha^n}}{\expect{T^n_k|~\alpha^n_k=\alpha^n}},\]
thus,
\[\expect{\left.\sum_{t\in\mathcal{T}^n_k}\left(\widehat{f}^n(\alpha^n_k)-y^n[t]\right)\right|~\alpha^n_k=\alpha^n}=0.\]
By the renewal property of the system, given $\alpha^n_k=\alpha^n$, $T^n_k$ and $\sum_{t\in\mathcal{T}^n_k}y^n[t]$ are independent of the past information before $t^n_k$. Thus, the same equality holds if conditioning also on $\mathcal F_k^n $, i.e.
\[\expect{\left.\sum_{t\in\mathcal{T}^n_k}\left(\widehat{f}^n(\alpha^n_k)-y^n[t]\right)\right|~\alpha^n_k=\alpha^n,~\mathcal F_k^n }=0.\]
Hence,
\[\expect{\left.\sum_{t\in\mathcal{T}^n_k}\left(\widehat{f}^n(\alpha^n_k)-y^n[t]\right)\right|~\mathcal F_k^n }=0.\]

By the definition of $f^n[t]$, this further implies that
\[\expect{\left.\sum_{t\in\mathcal{T}^n_k}\left(f^n[t]-y^n[t]\right)\right|~\mathcal F_k^n }=0.\]
Since $|y^n[t]|\leq y_{\max}$ and $\expect{T^n_k}\leq\sqrt{B}$, it follows $\expect{\left|\sum_{t\in\mathcal{T}^n_k}\left(f^n[t]-y^n[t]\right)\right|}<\infty$ and 
the process $\{F^n_K\}_{K=0}^\infty$ defined as
\[F^n_K=\sum_{k=0}^{K-1}\sum_{t\in\mathcal{T}^n_k}\left(f^n[t]-y^n[t]\right),~K\geq1,\]
$F^n_0=0$ is a \textit{martingale}. 

Consider any fixed $T\in\mathbb{N}$ and define $S^n[T]$ as the number of renewals up to $T$. Lemma \ref{valid-stopping-time} shows $S^n[T]$ is a valid stopping time with respect to the filtration $\{\mathcal F_k^n \}_{k=0}^\infty$.
Furthermore, $\{F^n_{K\wedge S^n[T]}\}_{K=0}^\infty$ is a supermartingale by Theorem \ref{stopping-time}, where $a\wedge b:=\min\{a,b\}$.

For this fixed $T$, we have
\begin{align*}
\expect{\sum_{t=0}^{T-1}\left(f^n[t]-y^n[t]\right)}
=&\expect{\sum_{t=0}^{t^n_{S^n[T]}-1}\left(f^n[t]-y^n[t]\right)}
-\expect{\sum_{t=T}^{t^n_{S^n[T]}-1}\left(f^n[t]-y^n[t]\right)}\\
=&\expect{F^n_{S^n[T]}}-\expect{\sum_{t=T}^{t^n_{S^n[T]}-1}\left(f^n[t]-y^n[t]\right)}.
\end{align*}
Since the number of renewals is always bounded by the number of slots at any time, i.e. $S^n[T]\leq T+1$, it follows
\[\expect{F^n_{S^n[T]}}=\expect{F^n_{(T+1)\wedge S^n[T]}}\leq 0.\]
On the other hand, 
\begin{align*}
\left|\expect{\sum_{t=T}^{t^n_{S^n[T]}-1}\left(f^n[t]-y^n[t]\right)}\right|
\leq\expect{t^n_{S^n[T]}-T}\cdot2y_{\max}\leq2y_{\max}\sqrt{B}.
\end{align*}
where the last inequality follows from Assumption \ref{bounded-assumption} for the residual life time. Thus, 
\[\expect{\sum_{t=0}^{T-1}\left(f^n[t]-y^n[t]\right)}\leq2y_{\max}\sqrt{B}.\]
Dividing both sides by $T$ finishes the proof.
\end{proof}

\begin{proof}[Proof of Lemma \ref{lemma:filtration}]
Recall that $t^n_k$ is the time slot where the $k$-th renewal occurs ($k=0,1,2,\cdots$), then, it follows from the definition of stopping time (\cite{Durrett}) that $\{t^n_k\}_{k=0}^\infty$ is a sequence of stopping times with respect to $\{\mathcal{F}[t]\}_{t=0}^{\infty}$ satisfying $t^n_k<t^n_{k+1},~\forall k$. Thus, by definition of 
$\mathcal F_k^n $, for any set $A\in\mathcal F_k^n $,
\[A\cap\{t^n_{k+1}\leq t\}=A\cap\{t^n_k\leq t\}\cap\{t^n_{k+1}\leq t\}\in\mathcal{F}[t].\]
Thus, $A\in\mathcal F_{k+1}^n $, which implies $\mathcal F_k^n \subseteq\mathcal F_{k+1}^n ,~\forall k$,
and $\{\mathcal F_k^n \}_{k=0}^\infty$ is indeed a filtration. This finishes the first part of the proof.

Next,
we would like to show that $G_{t^n_k}(Z^n_0,\cdots,Z^n[t^n_k-1])$ is measurable with respect to $\mathcal F_k^n ,~\forall k\geq1$,
i.e. $\left\{G_{t^n_k}(Z^n_0,\cdots,Z^n[t^n_k-1])\in B\right\}\in\mathcal F_k^n $, for any Borel set $B\subseteq\mathbb{R}$. By definition of 
$\mathcal F_k^n $, this is equivalent to showing $\{G_{t^n_k}(Z^n_0,\cdots,Z^n[t^n_k-1])\in B\} \cap \{t^n_k\leq s\}\in\mathcal{F}[s]$ for any slot $s\geq0$. For $s=0$, this is obvious because $ \{t^n_k\leq 0\}=\emptyset,~\forall k\geq1$. Consider any $s\geq1$,
\begin{align*}
&\left\{G_{t^n_k}(Z^n_0,\cdots,Z^n[t^n_k-1])\in B\right\} \cap \{t^n_k\leq s\}\\
&= \bigcup_{i=1}^{s}\left(\left\{G_{i}(Z^n_0,\cdots,Z^n[i-1])  \in B\right\}\bigcap\{t^n_k = i\}\right)\\
&= \bigcup_{i=1}^{s}\left(\left\{(Z^n_0,\cdots,Z^n[i-1])  \in G^{-1}_i(B)\right\}\bigcap\{t^n_k = i\}\right)
 \in\mathcal{F}[s], ~\forall k\geq1,
\end{align*}
where the last step follows from the assumption that
the random variable $Z^n[t-1]$ is measurable with respect to $\mathcal{F}[t]$ for any $t>0$ and $t^n_k$ is a stopping time with respect to $\{\mathcal{F}[t]\}_{t=0}^{\infty}$ for all $k\geq1$. This gives the second part of the claim.
\end{proof}

\begin{proof}[Proof of Lemma \ref{valid-stopping-time}]
We aim to prove $\{S^n[T]= k\}\in\mathcal F_k^n ,~\forall k\in\mathbb{N}$.
First of all, recall that the index of the renewal starts from $k=0$ and $t^n_0=0$, thus, 
for any $k\in\mathbb{N}$, 
$\{S^n[T]=k\} = \{t^n_k> T\}\cap\{t^n_{k-1}\leq T\}$,
and any $t\in\mathbb{N}$,
\begin{align}
\{S^n[T]=k\}\cap\{t^n_k\leq t\}
=&\{t^n_k> T\}\cap\{t^n_{k-1}\leq T\}\cap\{t^n_k\leq t\}. \label{the-set}
\end{align}

Consider two cases as follows:
\begin{enumerate}
\item $t\leq T$. In this case, the set \eqref{the-set} is empty and obviously belongs to $\mathcal{F}[t]$.
\item $t>T$. In this case, we have $\{t^n_k> T\}\cap\{t^n_k\leq t\}=\{T<t^n_k\leq t\}\in\mathcal{F}[t]$ as well as $\{t^n_{k-1}\leq T\}\in\mathcal{F}[T]\subseteq\mathcal{F}[t]$. Thus, the set \eqref{the-set} belongs to $\mathcal{F}[t]$.
\end{enumerate}
Overall, we have $\{S^n[T]=k\}\cap\{t^n_k\leq t\}\in \mathcal{F}[t],~\forall t\in\mathbb{N}$. Thus,
$\{S^n[T]=k\}\in\mathcal F_k^n $ and $S^n[T]$ is indeed a valid stopping time with respect to the filtration $\{\mathcal F_k^n \}_{k=0}^\infty$.
\end{proof}

The rest of the section is devoted to the proof of Lemma \ref{stationary-lemma}.

\begin{proof}[Proof of Lemma \ref{stationary-lemma}]
To prove the first part of the claim, we define the following notation:
\[\bigoplus_{n=1}^N\mathcal{P}^n:=\left\{\sum_{n=1}^N\mathbf{p}_n,~\mathbf{p}_n\in\mathcal{P}^n,~\forall n
\right\}\]
is the Minkowski sum of sets $\mathcal{P}_n,~n\in\{1,2,\cdots,N\}$, and for any sequence $\{\mathbf{x}[t]\}_{t=0}^\infty$ taking values in 
$\mathbb{R}^d$, define
$$\limsup_{T\rightarrow\infty}\mathbf{x}[T]:=
\left(\limsup_{T\rightarrow\infty}x_1[T],~\cdots,\limsup_{T\rightarrow\infty}x_d[T]\right)$$ 
is a vector of $\limsup$s. By definition, any vector in $\oplus_{n=1}^N\mathcal{P}^n$ can be constructed from $\otimes_{n=1}^N\mathcal{P}^n$, thus, it is enough to show that there exists a vector $\mathbf{r}^*\in\oplus_{n=1}^N\mathcal{P}^n$ such that $r_0^*=f^*$ and the rest of the entries $r^*_l\leq d_l,~l=1,2,\cdots,L$.

By the feasibility assumption for \eqref{prob-1}-\eqref{prob-2}, we can
consider \textit{any algorithm that achieves the optimality} of \eqref{prob-1}-\eqref{prob-2} and the corresponding process $\{(f^n[t],\mathbf{g}^n[t])\}_{t=0}^\infty$ defined in Lemma \ref{bound-lemma-1} for any system $n$. Notice that $(f^n[t],\mathbf{g}^n[t])\in\mathcal{P}^n,~\forall n,~\forall t$. This follows from the definition of $\widehat{f}^n(\alpha^n)$ and $\widehat{\mathbf{g}}^n(\alpha^n)$ in Definition \ref{PV-def} that
\begin{align*}
f^n[t] =& \widehat{f}^n(\alpha^n) = \widehat{y}^n(\alpha^n)/\widehat{T}^n(\alpha^n),
~~\textrm{if}~t\in\mathcal{T}^n_k,\alpha^n_k=\alpha^n\\
\mathbf{g}^n[t] =& \widehat{\mathbf{g}}^n(\alpha^n)= \widehat{\mathbf z}^n(\alpha^n)/\widehat{T}^n(\alpha^n),~~\textrm{if}~t\in\mathcal{T}^n_k,\alpha^n_k=\alpha^n,
\end{align*}
and 
$\left(\widehat{y}^n(\alpha^n),~\widehat{\mathbf z}^n(\alpha^n),~\widehat{T}^n(\alpha^n)\right)
\in\mathcal{S}^n$. By definition of $\mathcal{P}^n$ in Definition \ref{PR-def},
$(f^n[t],\mathbf{g}^n[t])\in\mathcal P^n,~\forall n,~\forall t$.

Since $\mathcal{P}^n$ is convex by Lemma \ref{convex-lemma}, it follows that 
$\left(\expect{f^n[t]},\expect{\mathbf{g}^n[t]}\right)\in\mathcal{P}^n,~\forall n,~\forall t$. Hence,
\[\left(\frac1T\sum_{t=1}^{T-1}\expect{f^n[t]},~\frac1T\sum_{t=1}^{T-1}\expect{\mathbf{g}^n[t]}\right)\in\mathcal{P}^n,~\forall T, \forall n.\]
This further implies that
\[
\mathbf{r}(T):=\left(\frac1T\sum_{t=1}^{T-1}\sum_{n=1}^N\expect{f^n[t]},~\frac1T\sum_{t=1}^{T-1}\sum_{n=1}^N\expect{\mathbf{g}^n[t]}\right)\in\bigoplus_{n=1}^N\mathcal{P}^n.
\]
By Lemma \ref{convex-lemma}, $\mathcal P^n$ is compact in $\mathbb{R}^{L+1}$. Thus,
$\oplus_{n=1}^N\mathcal{P}^n$ is also compact.
This implies that the sequence $\{\mathbf{r}(T)\}_{T=1}^\infty$ has at least one limit point, and any such limit point is contained in $\oplus_{n=1}^N\mathcal{P}^n$. 

We consider a specific limit point of $\{\mathbf{r}(T)\}_{T=1}^\infty$ denoted as $\mathbf{r}^*\in\oplus_{n=1}^N\mathcal{P}^n$, with the first entry denoted as $r_0^*$ satisfying 
$$r_0^* = \limsup_{T\rightarrow\infty}\frac1T\sum_{t=0}^{T-1}\sum_{n=1}^N\expect{f^n[t]}.$$
Then, we have the rest of the entries of $\mathbf{r}^*$ must satisfy
\[r_l^*\leq\limsup_{T\rightarrow\infty}\frac1T\sum_{t=0}^{T-1}\sum_{n=1}^N\expect{\mathbf{g}^n[t]},
~\forall l\in\{1,2,\cdots,L\}.\]
Now, 
by Lemma \ref{bound-lemma-1}, we can connect the $\limsup$ with respect to $f^n[t]$ and 
$\mathbf{g}^n[t]$ to that of $y^n[t]$ and $\mathbf{z}^n[t]$ as follows:
\begin{align*}
&\limsup_{T\rightarrow\infty}\frac1T\sum_{t=0}^{T-1}\sum_{n=1}^N\expect{y^n[t]}\\
=&\limsup_{T\rightarrow\infty}\frac1T\sum_{t=0}^{T-1}\sum_{n=1}^N\left(\expect{y^n[t]-f^n[t]}+\expect{f^n[t]}\right)\\
=&\lim_{T\rightarrow\infty}\frac1T\sum_{t=0}^{T-1}\sum_{n=1}^N\expect{y^n[t]-f^n[t]}
+\limsup_{T\rightarrow\infty}\frac1T\sum_{t=0}^{T-1}\sum_{n=1}^N\expect{f^n[t]}\\
=&\limsup_{T\rightarrow\infty}\frac1T\sum_{t=0}^{T-1}\sum_{n=1}^N\expect{f^n[t]}.
\end{align*}
Similarly, we can show that
\[
\limsup_{T\rightarrow\infty}\frac1T\sum_{t=0}^{T-1}\sum_{n=1}^N\expect{\mathbf{z}^n[t]}
=\limsup_{T\rightarrow\infty}\frac1T\sum_{t=0}^{T-1}\sum_{n=1}^N\expect{\mathbf{g}^n[t]}.\]
Thus, by our preceeding assumption that the algorithm under consideration achieves the optimality of \eqref{prob-1}-\eqref{prob-2}, we have 
\begin{align*}
&r_0^*=\limsup_{T\rightarrow\infty}\frac1T\sum_{t=0}^{T-1}\sum_{n=1}^N\expect{y^n[t]}=f^*\\
&r_l^*\leq\limsup_{T\rightarrow\infty}\frac1T\sum_{t=0}^{T-1}\sum_{n=1}^N\expect{z_l^n[t]}
\leq d_l,~\forall i\in\{1,2,\cdots,L\}.
\end{align*}
Overall, we have shown that $\mathbf{r}^*\in\oplus_{n=1}^N\mathcal{P}^n$ achieves the optimality of \eqref{prob-1}-\eqref{prob-2}, and the first part of the lemma is proved.

To prove the second part of the lemma, we show that any point in $\otimes_{n=1}^N\mathcal{P}^n$ is achievable by the corresponding time averages of some algorithm. Specifically, consider the following class of \textit{randomized stationary algorithms}: For each system $n$, at the beginning of $k$-th frame, the controller independently chooses an action $\alpha^n_k$ from the set $\mathcal{A}^n$ with a fixed probability distribution. 

Thus, the actions $\{\alpha^n_k\}_{k=0}^{\infty}$ result from any randomized stationary algorithm is i.i.d.. By the renewal property of each system, we have 
$$\left\{\left(\sum_{t\in\mathcal{T}^n_k}y^n[t],~\sum_{t\in\mathcal{T}^n_k}\mathbf{z}^n[t],~T^n_k\right)\right\}_{k=0}^\infty,$$
is also an i.i.d. process for each system $n$.

Next, we would like to show that any point in $\mathcal{S}^n$ can be achieved by the corresponding expectations of some randomized stationary algorithm.
Recall that $\mathcal{S}^n$ defined in Definition \ref{PR-def} is the convex hull of 
$$\mathcal{G}^n:=\left\{\left(\widehat{y}^n(\alpha^n),~\widehat{\mathbf z}^n(\alpha^n),~\widehat{T}^n(\alpha^n)\right),~\alpha^n\in\mathcal{A}^n\right\} \subseteq\mathbb{R}^{L+2},$$ 
By definition of convex hull, 
 any point $(y,\mathbf{z},T)\in\mathcal{S}^n$, can be written as a convex combination of a finite number of points from the set $\mathcal{G}^n$. Let $\left\{\left(\widehat{y}^n(\alpha^n_i),~\widehat{\mathbf z}^n(\alpha^n_i),~\widehat{T}^n(\alpha^n_i)\right)\right\}_{i=1}^m$ be these points,
then, we have there exists a finite sequence $\{p_i\}_{i=1}^m$, such that
\begin{align*}
&(y,\mathbf{z},T) = \sum_{i=1}^m p_i\cdot\left(\widehat{y}^n(\alpha^n_i),~\widehat{\mathbf z}^n(\alpha^n_i),~\widehat{T}^n(\alpha^n_i)\right),\\
&p_i\geq0,~\sum_{i=1}^mp_i=1.
\end{align*}
We can then use $\{p_i\}_{i=1}^m$ to construct the following randomized stationary algorithm: At the start of each frame $k$, the controller independently chooses action $\alpha_i\in\mathcal{A}^n$ with probability $p_i$ defined above for $i=1,2,\cdots,m$. Then, the one-shot expectation of this particular randomized stationary algorithm on system $n$ satisfies
\[
\left(\expect{\sum_{t\in\mathcal{T}^n_k}y^n[t]},~\expect{\sum_{t\in\mathcal{T}^n_k}\mathbf{z}^n[t]},~\expect{T^n_k}\right)=
 \sum_{i=1}^m p_i\cdot\left(\widehat{y}^n(\alpha^n_i),~\widehat{\mathbf z}^n(\alpha^n_i),~\widehat{T}^n(\alpha^n_i)\right)=(y,\mathbf{z},T),
\]
which implies any point in $\mathcal{S}^n$ can be achieved by the corresponding expectations of a randomized stationary algorithm.

Next, by definition of $\mathcal P^n$ in Definition \ref{PR-def}, any $(\overline{f}^n,\overline{\mathbf{g}}^n)\in\mathcal P^n$ can be written as $(\overline{f}^n,\overline{\mathbf{g}}^n)=(y/T,\mathbf{z}/T)$, where $(y,\mathbf{z},T)\in\mathcal{S}^n$. Thus, it is achievable by the ratio of one-shot expectations from a randomized stationary algorithm, i.e.
\[
\frac{\expect{\sum_{t\in\mathcal{T}^n_k}y^n[t]}}{\expect{T^n_k}}=\frac{y}{T}=\overline{f}^n,~~
\frac{\expect{\sum_{t\in\mathcal{T}^n_k}\mathbf{z}^n[t]}}{\expect{T^n_k}}=\frac{\mathbf{z}}{T}
=\overline{\mathbf{g}}^n.
\]
Now we claim that for $y^n[t]$, $\mathbf{z}^n[t]$ and $T^n_k$ result from the randomized stationary algorithm,
\begin{align}
&\lim_{T\rightarrow\infty}\frac1T\sum_{t=0}^{T-1}\expect{y^n[t]}=\frac{\expect{\sum_{t\in\mathcal{T}^n_k}y^n[t]}}{\expect{T^n_k}},\label{lln-1}\\
&\lim_{T\rightarrow\infty}\frac1T\sum_{t=0}^{T-1}\expect{\mathbf{z}^n[t]}=\frac{\expect{\sum_{t\in\mathcal{T}^n_k}\mathbf{z}^n[t]}}{\expect{T^n_k}}.\label{lln-2}
\end{align}
We prove \eqref{lln-1} and \eqref{lln-2} is shown in a similar way. Consider any fixed $T$, and let $S^n[T]$ be the number of renewals up to (and including) time $T$. Then, from Lemma \ref{valid-stopping-time} in Section \ref{section:limiting}, $S^n[T]$ is a valid stopping time with respect to the filtration 
$\{\mathcal F_k^n \}_{k=0}^\infty$. We write
\begin{equation}\label{split-stop}
\frac1T\sum_{t=0}^{T-1}\expect{y^n[t]}=\frac{1}{T}
\expect{\sum_{k=0}^{S^n[T]}\sum_{t\in\mathcal{T}^n_k}y^n[t]}-\frac1T\expect{\sum_{t=T}^{t^n_{S^n[T]}-1}y^n[t]}.
\end{equation}
For the first part on the right hand side of \eqref{split-stop}, since $\left\{\sum_{t\in\mathcal{T}^n_k}y^n[t]\right\}_{k=0}^{\infty}$ is an i.i.d. process, by Wald's equality (Theorem 4.1.5 of \cite{Durrett}),
\[
\frac{1}{T}
\expect{\sum_{k=0}^{S^n[T]}\sum_{t\in\mathcal{T}^n_k}y^n[t]}=\expect{\sum_{t\in\mathcal{T}^n_k}y^n[t]}
\cdot\frac{\expect{S^n[T]}}{T}.
\] 
By renewal reward theorem (Theorem 4.4.2 of \cite{Durrett}),
\[
\lim_{T\rightarrow\infty}\frac{\expect{S^n[T]}}{T}=\frac{1}{\expect{T^n_k}}.
\]
Thus,
\[
\lim_{T\rightarrow\infty}\frac{1}{T}
\expect{\sum_{k=0}^{S^n[T]}\sum_{t\in\mathcal{T}^n_k}y^n[t]}
=\frac{\expect{\sum_{t\in\mathcal{T}^n_k}y^n[t]}}{\expect{T^n_k}}.
\]
For the second part on the right hand side of \eqref{split-stop}, by Assumption \ref{bounded-assumption},
\[
\left|\expect{\sum_{t=T}^{t^n_{S^n[T]}-1}y^n[t]}\right|\leq y_{\max}\cdot\expect{t^n_{S^n[T]}-T}
\leq\sqrt{B}y_{\max},
\]
which implies $\lim_{T\rightarrow\infty}\frac1T\expect{\sum_{t=T}^{t^n_{S^n[T]}-1}y^n[t]}=0$. Overall, we have \eqref{lln-1} holds.

To this point, we have shown that for any $(\overline{f}^n,\overline{\mathbf{g}}^n)\in\mathcal P^n$, $n\in\{1,2,\cdots,N\}$, there exists a randomized stationary algorithm so that 
\begin{align*}
\lim_{T\rightarrow\infty}\frac1T\sum_{t=0}^{T-1}\expect{y^n[t]}=\overline{f}^n,~~
\lim_{T\rightarrow\infty}\frac1T\sum_{t=0}^{T-1}\expect{\mathbf{z}^n[t]}=\overline{\mathbf{g}}^n,
\end{align*}
for any $n\in\{1,2,\cdots,N\}$. Since $f^*$ is the optimal solution to \eqref{prob-1}-\eqref{prob-2} over all algorithms, it follows for any $(\overline{f}^n,\overline{\mathbf{g}}^n)\in\mathcal P^n$, $n\in\{1,2,\cdots,N\}$ satisfying $\sum_{n=1}^N\overline{g}^n_l\leq d_l,~\forall l\in\{1,2,\cdots,L\}$, we have
$\sum_{n=1}^N\overline{f}^n\geq f^*$, and the second part of the lemma is proved.
\end{proof}

\bibliographystyle{imsart-number}
\bibliography{asyn-theory}

\end{document}